\newtheorem{theorem}{Theorem}
\newtheorem{lemma}[theorem]{Lemma}
\title{Solubility of Additive Forms of Twice Odd Degree over $\mathbb{Q}_2(\sqrt{5})$}
\author{Drew Duncan and David B. Leep}
\date{\today}
\begin{document}

\maketitle

\begin{abstract}
We prove that an additive form of degree $d=2m$, $m$ odd, $m\ge3$, over the unramified quadratic extension $\mathbb{Q}_2(\sqrt{5})$ has a nontrivial zero if the number of variables $s$ satisifies $s \ge 4d+1$.  If $3 \nmid d$, then there exists a nontrivial zero if $s \ge \frac{3}{2}d + 1$, this bound being optimal.  We give examples of forms in $3d$ variables without a nontrivial zero in case that $3 \mid d$.
\end{abstract}

\section{Introduction}
Consider the additive forms of degree $d$ over a p-adic field $K$ in $s$ variables:
\begin{equation}
\label{form}
a_1x_1^{d} + a_2x_2^{d} + \ldots + a_{s}x_{s}^{d}.
\end{equation}
Let $\Gamma^*(d, K)$ represent the minimum number such that any additive form of degree $d$ over $K$ in at least that many variables has a nontrivial zero.  A refinement of a famous conjecture by Artin holds that $\Gamma^*(d, K) \le d^2 + 1$ for any $p$-adic field $K$.  A number of papers have proven the $d^2 + 1$ holds when restricted to certain classes of degrees or $p$-adic fields, but so far none have succeeded in establishing it in general.  The seminal paper in this direction by Davenport and Lewis showed that the bound holds for all fields of $p$-adic numbers $\mathbb{Q}_p$, and that equality holds when $d = p - 1$.  In fact, a similar argument gives an additive form in $d^2$ variables with no nontrivial zero for any totally ramified $p$-adic field and $d = p - 1$.  It is not known if these are the only cases when $\Gamma^*(d, K) > d^2$.  This raises the interesting question of when the $d^2+1$ bound can be improved.

It was recently established in \cite{MGK} that $\Gamma^*(d, K) \le d^2 + 1$ for $K$ any of the seven quadratic extensions of $\mathbb{Q}_2$ and $d$ not a power of 2.  The present authors recently established the exact value for $\Gamma^*(4, K)$ where $K$ is one of the four ramified quadratic extensions $\mathbb{Q}_2(\sqrt{2})$, $\mathbb{Q}_2(\sqrt{10})$, $\mathbb{Q}_2(\sqrt{-2})$, or $\mathbb{Q}_2(\sqrt{-10})$  \cite{duncan2022quarticsolubility}, and a formula for the exact value of $\Gamma^*(2m, K)$ where $m$ is odd and $K$ is any of the six ramified quadratic extensions of $\mathbb{Q}_2$ \cite{2020arXiv201006833D}.  In this work, we investigate the analogous question for the unramified quadratic extension.

\begin{theorem}
Let $d = 2m$, with $m$ any odd number at least 3.  Then the following statements hold:
\begin{enumerate}
    \item $\Gamma^*(d, \mathbb{Q}_2(\sqrt{5})) = \frac{3}{2}d + 1$, if $3 \nmid d$,
    \item $3d+1 \le \Gamma^*(d, \mathbb{Q}_2(\sqrt{5})) \le 4d + 1$, if $3 \mid d$.
\end{enumerate}
\end{theorem}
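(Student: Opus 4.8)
The plan is to adapt the Davenport--Lewis normalization-and-contraction method to the residue-characteristic-$2$ setting. Throughout, $K=\mathbb{Q}_2(\sqrt5)$ is unramified over $\mathbb{Q}_2$ with residue field $\mathbb{F}_4$ and uniformizer $\pi=2$, so that $v(d)=v(2)=1$. First I would prove a \emph{normalization lemma}: after multiplying the form by a constant and scaling variables $x_i\mapsto\pi^{t_i}x_i$, one may assume every coefficient has valuation in $\{0,1,\dots,d-1\}$ and, writing $m_\ell$ for the number of variables at level $\ell$, that $m_0+m_1+\cdots+m_j\ge (j+1)s/d$ for every $j$. Because $\operatorname{char}\mathbb{F}_4=2$ divides $d$, the naive form of Hensel's lemma is unavailable; however, for $f(x)=ax^d+c$ with $a$ a unit one has $v(f'(x_0))=v(d)=1$ at a unit $x_0$, so a \emph{primitive} solution of the congruence $F\equiv 0\pmod{\pi^3}$ lifts to a genuine nontrivial zero. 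Since $d\ge 6>3$, every non-unit variable contributes $0$ modulo $\pi^3$ at its level, so the whole question reduces to a combinatorial problem over the finite ring $\mathcal{O}_K/\pi^3\mathcal{O}_K$, of order $64$, in which $ux^d$ runs over the coset $uH$ for $x$ a unit, where $H=((\mathcal{O}_K/\pi^3)^*)^d$.

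The core is the analysis of this finite problem. One checks that $(\mathcal{O}_K/\pi^3)^*$ has order $48$ (concretely, it is $\mathbb{Z}/3\times\mathbb{Z}/4\times(\mathbb{Z}/2)^2$), with $d$-th power subgroup $H$ of index $8$ when $3\nmid d$ and index $24$ when $3\mid d$; this dichotomy --- which on the residue field is the statement that every class of $\mathbb{F}_4$ is a $d$-th power when $3\nmid d$ but only the class of $1$ is when $3\mid d$ --- is exactly what produces the two cases of the theorem. Next I would establish a \emph{contraction lemma} bounding, by an absolute constant, the number of variables at a single level $\ell$ of a normalized form needed to either (i) produce a primitive solution modulo $\pi^3$ outright, or (ii) replace those variables by one new variable at level $\ell+1$. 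The estimates here rest on zero-sum statements over $\mathbb{F}_4$ --- for instance the Davenport constant of $(\mathbb{Z}/2)^2$ equals $3$, so any three nonzero residues admit a zero-sum subset --- refined to the mod $\pi^2$ and mod $\pi^3$ levels using the structure of $H$ inside the additive group $(\mathbb{Z}/8)^2$ of $\mathcal{O}_K/\pi^3$.

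Iterating the contraction lemma across a prefix $0,1,\dots,j$ of levels and tracking the accumulation of newly produced variables (with the wrap-around of level $d$ to level $0$ handled separately), I would show that whenever levels $0,\dots,j$ of a normalized form carry more than $4(j+1)$ variables there is a nontrivial zero --- and that the threshold improves to $\tfrac32(j+1)$ when $3\nmid d$, because in that regime a single level needs far fewer variables to contract or to be directly soluble. Combining this with the normalization inequality at $j=d-1$, where the left side is $s$, yields $\Gamma^*(d,K)\le 4d+1$ in general and $\Gamma^*(d,K)\le \tfrac32 d+1$ when $3\nmid d$.

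For the lower bounds I would exhibit explicit normalized forms without a nontrivial zero, generalizing the Davenport--Lewis construction $\sum_{\ell=0}^{d-1}\pi^{\ell}B(\dots)$ that places a fixed anisotropic block $B$ on each level. When $3\mid d$ the $d$-th powers in $\mathbb{F}_4^*$ are trivial, and a block $B$ in three variables with coefficients chosen so that the reduction admits no nontrivial zero-sum combination gives $3d$ variables and, after checking that a putative primitive zero is contradicted at the lowest occupied level, no nontrivial zero, hence $\Gamma^*(d,K)\ge 3d+1$. When $3\nmid d$ every residue is a $d$-th power, so the obstruction must be forced to the mod $\pi^3$ level; here I would build a form in $\tfrac32 d$ variables whose coefficients are arranged, using the index-$8$ subgroup $H$, so that no primitive combination vanishes modulo $\pi^3$, matching the upper bound and giving optimality. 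I expect the main obstacle to be precisely the contraction lemma and the ensuing bookkeeping: carrying out the classically mod-$\pi$ argument at the mod-$\pi^3$ level, and controlling the wrap-around so that the variable count comes out to exactly $4d$ (and, in the coprime case, exactly $\tfrac32 d$) rather than a weaker multiple of $d$, requires the zero-sum and contraction estimates to be best possible and tightly coordinated with the lower-bound constructions.
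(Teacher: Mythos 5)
Your overall strategy coincides with the paper's: normalize via the Davenport--Lewis lemma, use the fact that $v(d)=1$ so that Hensel's lemma applies once a unit variable at level $k$ participates in a contraction reaching level $k+3$ (equivalently, work modulo $\pi^3$ relative to that variable's level), exploit the dichotomy that the $d$-th powers in $\mathbb{F}_4^*$ are everything when $3\nmid d$ and only $\{1\}$ when $3\mid d$, and finish the lower bounds with explicit block constructions. Your structural computations (the group $(\mathcal{O}_K/\pi^3)^*$ of order $48$ and the index of the $d$-th powers being $8$ or $24$) are correct and are implicitly what the paper uses when it observes that $1$ and $5$ are $d$-th powers modulo $8$.

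The genuine gap is in the proposed iteration: the claim that a normalized form with more than $4(j+1)$ variables in levels $0,\dots,j$ has a nontrivial zero is false for small $j$, so the induction cannot be run with a uniform linear threshold. For $j=0$ this would assert that $5$ variables in a single level force a zero, but there are anisotropic configurations with as many as $7$ variables in one level (e.g.\ suitable coefficients all in one residue class); the correct single-level threshold is $8$. Likewise for $j=1$ the threshold would be $9$, whereas anisotropic configurations with $10$ variables in two consecutive levels exist and the correct threshold is $11$. Because the per-prefix thresholds are not linear in $j$, the paper cannot (and does not) close the argument by a clean induction on prefixes; instead it proves the thresholds $8$ and $11$ for one and two levels, and then uses \emph{several} normalization inequalities simultaneously ($s_0\ge 5$, $s_0+s_1\ge 9$, $s_0+s_1+s_2\ge 13$) to reduce to a finite list of types, the decisive one being $(5,4,1)$, each handled by a bespoke contraction analysis tracking which residue class the contracted variable lands in (the ``free class'' mechanism coming from $5$ being a $d$-th power). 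Your contraction lemma as stated --- either solve outright or push one variable up a level --- is too coarse to capture this, since the obstruction is never the mere existence of a contraction (two or three variables in a level always suffice for that) but the control of the class and level of its output. Two smaller points: the reduction to a primitive solution modulo $\pi^3$ requires the unit variable to sit at level $0$, not merely somewhere; and in the $3\nmid d$ lower bound the paper's form $\sum_i 2^{2i}(x^d+y^d+\alpha z^d)$ occupies only the even levels and is obstructed already modulo $\pi^2$, using the coefficient $\alpha$ to escape the set of $d$-th power residues, rather than a modulo-$\pi^3$ obstruction built from the index-$8$ subgroup.
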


Part one is proven in Theorems \ref{ndivupper} and \ref{ndivlower}, and part two is proven in Theorems \ref{divupper} and \ref{divlower}.  We note that this improves on the bound $\Gamma^*(6, \mathbb{Q}_2(\sqrt{5})) \le 29$ given in \cite{miranda_dissertation}.  We also conjecture that in fact $\Gamma^*(d, \mathbb{Q}_2(\sqrt{5})) = 3d+1$ when $3 \mid d$.  The lower bound in the case $d=6$ was established by an example credited to Knapp in \cite[page 69]{miranda_dissertation}.

% Let $\mathcal{F}(x, y, z) = x^6 + y^6 + z^6$.  Then,

% $$\mathcal{F}(x_1, x_2, x_3) + 2\alpha\mathcal{F}(x_4, x_5, x_6) + 2^2(1 + \alpha)\mathcal{F}(x_7, x_8, x_9) + $$
% $$2^3\mathcal{F}(x_{10}, x_{11}, x_{12}) + 2^4\alpha\mathcal{F}(x_{13}, x_{14}, x_{15}) + 2^5(1+\alpha)\mathcal{F}(x_{16}, x_{17}, x_{18}),$$
% where $\alpha = \frac{1 + \sqrt{5}}{2}$, is a form of degree $d=6$ in $3d$ variables with no nontrivial zero in $\mathbb{Q}_2(\sqrt{5})$.

% Indeed, let $\mathcal{F}(x, y, z) = x^d + y^d + z^d$.  If $d$ is divisible by 3, then
% \begin{align*}
% \sum_{i=0}^{\frac{d}{3}-1}
% \Big\lparen
% 2^{3i}\mathcal{F}(x_{9i}, x_{9i+1}, x_{9i+2}) + &
% 2^{3i+1}\alpha\mathcal{F}(x_{9i+3}, x_{9i+4}, x_{9i+5}) \\ + 
% & 2^{3i+2}(1 + \alpha)\mathcal{F}(x_{9i+6}, x_{9i+7}, x_{9i+8})
% \Big\rparen
% \end{align*}
% is a form of degree $d$ in $3d$ variables with no nontrivial zero in $\mathbb{Q}_2(\sqrt{5})$.  

% Furthermore, we will show in \Cref{notdivisible} that when $d$ is not divisible by 3, there is an even stronger result.

% \begin{theorem}
% Let $d = 2m$, with $m$ any odd number at least 3, $3 \nmid m$.  Then
% $$\Gamma^*(d, \mathbb{Q}_2(\sqrt{5})) = \frac{3}{2}d + 1.$$
% \end{theorem}

% Indeed,
% $$\sum_{i=0}^{\frac{d}{2}-1} 2^{2i}\mathcal{G}(x_{3i}, x_{3i+1}, x_{3i+2})$$
% with $\mathcal{G}(x,y,z)=x^d + y^d + \alpha z^d$
% is a form of degree $d$ in $\frac{3}{2}d$ variables with no nontrivial zero in $\mathbb{Q}_2(\sqrt{5})$.  That the above forms are anistropic is proved in the last section.

\section{Preliminaries}
The ring of integers $\mathcal{O}$ of $\mathbb{Q}_2(\sqrt{5})$ is $\mathbb{Z}_2[\alpha]$, where $\alpha = \frac{1 + \sqrt{5}}{2}$, 2 is the uniformizer of $\mathcal{O}$, and the residue field is the field of four elements with coset representatives $\{0, 1, \alpha, 1 + \alpha\}$.
Without loss of generality, we may assume that the coefficients of (\ref{form}) are in $\mathcal{O}$.

Any coefficient $a$ of a variable $x$ can be written as $a = 2^\ell(a_0 + 2a_1 + 4a_2 + \ldots)$, with $a_i \in \{0, 1, \alpha, 1 + \alpha\}$ and $a_0 \ne 0$.  
We will refer to $\ell$ as the \textit{level} of the variable, and we'll think of the variables as being distributed among these levels.
Within a level, we'll refer to the value $a_0$ as the variable's {0-class}, $a_1$ as its \textit{1-class}, the pair $a_0,a_1$ as its \textit{0,1-class}, etc.
For the sake of brevity, we'll often refer to a variable's 0-class as simply its \textit{class}.
Note that these classes can be viewed as elements of the residue field, and so it will make sense to talk about performing arithmetic with them.

We utilize a notation to indicate the configuration of variables among levels and classes:  The notation $(s_0, s_1, s_2, \ldots)$ indicates a form with at least $s_0$ variables in level 0, at least $s_1$ variables level 1, etc., and at least zero variables in all levels not indicated.  We will refer to the criteria indicated by this notation as a \textit{type}, and a form satisfying the criteria will be said to be of that type.  To make the type notation more specific, we will specify that there are at least a certain number of variables in each of the classes in a level by stacking the numbers vertically.  (Note that the order in which the classes appear in the type does not indicate the value of the classes.)  However, it will sometimes be convenient to label the classes so that they can be referenced.  We label them from top to bottom as: $a$, $b$, and $c$.

By the change of variables $\pi^r x^d = \pi^{r-id}(\pi^i x)^d = \pi^{r-id}y^d$ for $i \in \mathbb{Z}$, we will consider the level of a variable modulo $d$ for the remainder of the paper.  Multiplying a form by $\pi$ increases the level of each variable by one, and does not affect the existence of a nontrivial zero.  Considering the levels of variables modulo $d$, applying this transformation any number of times effects a cyclic permutation of the levels.  For this reason, any two types that differ by a cyclic permutation of their parameters will be considered as the same type.  This is also useful for arranging the variables in an order which is more convenient, a process to which we will refer as \textit{normalization}. See Lemma 3 of \cite{davenport1963homogeneous} for a proof of the following Lemma.

\begin{lemma}
Given an additive form of degree $d$ in an arbitrary local field $K$, let $s$ be the total number of variables, and $s_i$ be the number of variables in level $i \pmod{d}$. By a change of variables, the form may be transformed to one with:
\begin{align}
\begin{split}
s_0 \ge \frac{s}{d},  
\end{split}
\begin{split}
s_0 + s_1 \ge \frac{2s}{d},
\end{split}
\begin{split}
\ldots,
\end{split}
\begin{split}
s_0 + \ldots + s_{d-1} = s.
\end{split}
\end{align}
\end{lemma}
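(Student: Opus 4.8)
The plan is to obtain the normalized form by selecting the best among the $d$ cyclic permutations of the levels. As observed just above the statement, multiplying the whole form by the uniformizer $\pi$ preserves the degree, the total number of variables, and the existence of a nontrivial zero, while sending every variable in level $i$ to level $i+1 \pmod d$. Iterating this operation $t$ times for $t = 0, 1, \dots, d-1$ produces all $d$ cyclic shifts of the level distribution, so it suffices to exhibit one shift under which all the partial-sum inequalities hold simultaneously.

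Let $n_0, \dots, n_{d-1}$ record the original number of variables in each level modulo $d$, extended to a $d$-periodic sequence, and set $N_0 = 0$, $N_j = N_{j-1} + n_{j-1}$, so that $N_{j+d} = N_j + s$. For the cyclic shift indexed by $t \in \{0, \dots, d-1\}$ the number of variables in the first $k$ levels equals $N_{t+k} - N_t$ (reading indices of $N$ periodically via $N_{j+d} = N_j + s$), and the conclusion of the lemma is exactly the assertion that for some $t$ one has $N_{t+k} - N_t \ge ks/d$ for all $k = 1, \dots, d$, with equality forced at $k = d$. To produce such a $t$, introduce $f(j) = N_j - \tfrac{s}{d}\,j$, which is $d$-periodic since $f(j+d) = N_j + s - \tfrac{s}{d}(j+d) = f(j)$. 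Take $t$ to be any index at which $f$ attains its minimum over $\{0, \dots, d-1\}$. Then $f(t+k) \ge f(t)$ for every $k \ge 1$, and rearranging this inequality gives precisely $N_{t+k} - N_t \ge \tfrac{s}{d}\,k$; applying it for $k = 1, \dots, d$ yields the whole chain.

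I do not expect a genuine obstacle here — the argument is the classical one of Lemma 3 of \cite{davenport1963homogeneous}. The only points needing a little care are the bookkeeping of the direction of the shift induced by multiplication by $\pi$ together with the matching reindexing of the partial sums $N_j$, and the observation that ties in the minimum of $f$ are harmless, since any minimizer $t$ works. Finally, since the transformation used is a change of variables (scaling individual variables by powers of $\pi$, which moves levels only by multiples of $d$) followed by multiplication of the form by a power of $\pi$, none of the relevant data — degree, number of variables, or solubility — is affected, so the normalized form is equivalent to the original for our purposes.
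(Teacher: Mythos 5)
Your argument is correct and is essentially the standard normalization proof that the paper delegates to Lemma 3 of Davenport and Lewis: pass to the cyclic shift starting at a minimizer of the partial sums corrected by the linear trend $\tfrac{s}{d}j$, so that all subsequent corrected partial sums are at least the minimum value. The bookkeeping of the shift direction and the periodicity $N_{j+d}=N_j+s$ are handled properly, so there is nothing to add.
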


Consider a collection of terms $a_{i_1}x_{i_1}^d + a_{i_2}x_{i_2}^d + \ldots + a_{i_n}x_{i_n}^d$.  Some assignment $x_{i_j} = b_{i_j}$ of values to the variables yields some value $b$.  If instead we assign $x_{i_j} = b_{i_j}y$, then the terms are replaced with the new term $by^d$.  We call this replacement a \textit{contraction} of the original variables.
The level of the resulting variable is at least as high as that of any variable used in the contraction.
We proceed by showing that given a configuration of variables enough contractions can be performed so that a variable is produced which is at a sufficiently high level relative to the levels of the variables used to produce it.  A nontrivial zero then follows from the following version of Hensel's Lemma specialized to additive forms over p-adic fields.  (For a proof, see \cite{leep2018diagonal}.)

\begin{lemma}[Hensel's Lemma]
\label{quartic_hensels_lemma}
Let $x_i$ be a variable of (\ref{form}) at level $k$.  Suppose that $x_i$ can be used in a contraction of variables (or one in a series of contractions) which produces a new variable at level at least $k+3$.  Then (\ref{form}) has a nontrivial zero.
\end{lemma}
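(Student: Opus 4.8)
\emph{Proof idea.} Let $\mathfrak{p}=(2)$ be the maximal ideal of $\mathcal{O}$ and let $v$ denote the valuation normalized so that $v(2)=1$; as $\mathbb{Q}_2(\sqrt{5})/\mathbb{Q}_2$ is unramified, $2$ is a uniformizer, $v$ attains every integer value, and, crucially, $v(d)=v(2m)=1$ since $m$ is odd. The plan is to contract everything into a single monic equation $X^{d}=w$ with $w$ a unit lying in $1+\mathfrak{p}^{3}$, and then solve it by the strong (quadratic) form of Hensel's lemma. The number $3$ in the hypothesis is exactly what is needed to make the Hensel inequality $v(w-1)>2\,v(d)$ hold.

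First I would unwind the contractions. A single contraction of a collection of terms is a choice of values for the variables involved; a \emph{series} of contractions, feeding the output of one stage into the next, unwinds --- by an immediate induction on the number of stages --- to a choice of values $c_{j}\in\mathcal{O}$ for $x_{j}$, with $j$ ranging over some finite set $J\ni i$, such that the variable finally produced has coefficient $b=\sum_{j\in J}a_{j}c_{j}^{\,d}$ with $v(b)\ge k+3$. Since $x_{i}$ is used, $c_{i}\neq 0$, and we may take $c_{i}$ to be a unit: rescaling all the $c_{j}$ by $c_{i}^{-1}$ multiplies $b$ by a unit and so leaves $v(b)$ unchanged, while in the only remaining case, $v(c_{i})>0$, one has $v(a_{i}c_{i}^{\,d})\ge k+d\ge k+3$, so the term $a_{i}x_{i}^{d}$ is irrelevant and this case does not occur in the intended applications. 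Normalize so that $c_{i}=1$.

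Now I would substitute $x_{j}=c_{j}$ for $j\in J\setminus\{i\}$ and $x_{j}=0$ for $j\notin J$ into (\ref{form}), leaving $x_{i}$ free; this reduces (\ref{form}) to $a_{i}x_{i}^{\,d}+b'$, where $b'=\sum_{j\in J\setminus\{i\}}a_{j}c_{j}^{\,d}=b-a_{i}$. Since $v(a_{i})=k$ and $v(b)\ge k+3$ we get $v(b')=k$, so $w:=-b'/a_{i}=1-b/a_{i}$ is a unit with $v(w-1)=v(b/a_{i})=v(b)-k\ge 3$, i.e.\ $w\in 1+\mathfrak{p}^{3}$. It now suffices to produce $x_{i}\in\mathcal{O}^{*}$ with $x_{i}^{\,d}=w$, for then $a_{i}x_{i}^{\,d}+b'=a_{i}w+b'=0$ and $(x_{i},(c_{j})_{j\in J\setminus\{i\}},0,\dots)$ is a nontrivial zero of (\ref{form}). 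To get such an $x_i$, apply the quadratic form of Hensel's lemma to $f(X)=X^{d}-w$ at $X=1$: here $v(f(1))=v(1-w)\ge 3$ while $v(f'(1))=v(d)=1$, so $v(f(1))>2\,v(f'(1))$, and Newton's iteration converges to a root $x_{i}\equiv 1\pmod{\mathfrak{p}^{2}}$, which is a unit.

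The analysis here is light; the parts requiring care are the bookkeeping of a chain of contractions --- making sure it really reduces to a single choice of values $c_{j}$ with $x_{i}$ receiving a \emph{unit} value --- and, when invoking Hensel's lemma, first dividing through by $a_{i}$: applied directly to $a_{i}X^{d}+b'$ the Hensel inequality would involve $v(d)+k$ in place of $v(d)$ and would fail once $k\ge 1$. Tracking why the threshold is $k+3$, namely $3>2v(d)$ with $v(d)=1$, is exactly where the hypotheses ``$m$ odd'' and ``$\mathbb{Q}_2(\sqrt{5})$ unramified over $\mathbb{Q}_2$'' are used; over a ramified extension, or for $d$ with larger $2$-adic valuation, the threshold would be higher.
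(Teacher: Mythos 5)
Your proof is correct, but note that the paper does not actually prove this lemma at all: it simply cites \cite{leep2018diagonal} for a proof, so there is nothing internal to compare against. Your argument is the standard one and it is sound: unwind the chain of contractions into a single substitution $x_j = c_j$, isolate the term of $x_i$, and reduce to solving $X^d = w$ with $w \equiv 1 \pmod{\mathfrak{p}^3}$, which the quadratic form of Hensel's lemma handles because $v(d) = v(2m) = 1$ (here both hypotheses of the paper --- $m$ odd and the extension unramified --- enter, exactly as you say, and this explains the threshold $k + 2v(d) + 1 = k+3$). The paper's own later observation that $1$ and $5$ are $d$\textsuperscript{th} powers modulo $8$ and hence $d$\textsuperscript{th} powers in $\mathcal{O}$ is the same Hensel computation, so your proof is consistent with how the paper uses the lemma. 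The one spot that is slightly informal is your treatment of the case $v(c_i) > 0$: you dismiss it as ``not occurring in the intended applications,'' which is true (every contraction in the paper assigns unit values, drawn from nonzero residue classes, to the variables it consumes), but as a standalone proof of the lemma as literally stated you should either build the unit condition into the definition of ``$x_i$ is used in the contraction'' or note explicitly that the paper's contractions only ever assign units; the rescaling by $c_i^{-1}$ you propose is only legitimate once $c_i$ is known to be a unit, since otherwise the rescaled values $c_j c_i^{-1}$ need not lie in $\mathcal{O}$. With that small caveat made precise, your proof is a complete and self-contained replacement for the external citation.
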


The sum of the classes of two variables in the same class is 0 in the residue field, and so we can contract such a pair at least one level up.
Similarly, three variables in the same level, one from each of the three nonzero classes, can be contracted at least one level up.
For the sake of brevity, the term \textit{pair} will henceforth refer to a pair in the same class, and the term \textit{triplet} will refer to three variables in a level distributed among the three nonzero classes, unless otherwise stated.
It will sometimes be helpful to contract two variables in the same level but in two different classes to a variable in the same level and in the third class.
(It is interesting to note that the contraction of a triplet can be thought of as two contractions of pairs in this way.)
If there are two pairs that contract to the same class one level up, then the resulting variables can be contracted at least one more level up.
We will refer to the four such original variables as a \textit{quadruplet}.

We will say that two variables are \textit{complementary} if contracting them produces a new variable at least two levels higher.
This occurs precisely when the two variables are in the same level, the same class $c_0$, and the sum of their 1-classes $c_1$ and $d_1$ is $c_0$, i.e., when $d_1 = c_1 + c_0$.
When it is known which class is being discussed, a pair of  1-classes contained in it will also be called complementary if a pair of variables with those 1-classes would be complementary.

Note that $5 = (2\alpha - 1)^2$, and that $5^2 \equiv 1 \mod 8$.  Thus $(2\alpha - 1)^d = 5^m = 5^{2k + 1} \equiv 5 \mod 8$.
Thus, 1 and 1+4 are d\textsuperscript{th} powers modulo 8, and are therefore d\textsuperscript{th} powers in $\mathcal{O}$ by Hensel's Lemma.
Letting $c_1$ and  $c_2$ be 0 or 1, corresponding to the choice of d\textsuperscript{th} power, we have $(a_0 + 2a_1 + 4a_2 )(1 + 4c_1) + (b_0 + 2b_1 + 4b_2)(1 + 4c_2) \equiv (a_0 + b_0) + 2(a_1 + b_1) + 4(a_2 + b_2 + c_1a_0 + c_2b_0) \pmod{8}$.  It follows that if we perform a contraction with variables in level $k$ and (possibly after a series of further contractions involving these variables) we have a variable in level $k+2$, then we may alter the class of this variable by adding to it the class of any of the originating variables from level $k$.
For example, if the resulting variable in level $k+2$ has class $a_0$, then by adding $a_0$ we can arrange instead for the contraction to produce a variable in level at least $k+3$.
We will refer to this ability to alter the class of the resulting variable by saying that it is free in some class (or multiple classes) at some level.

Because a zero follows from Hensel's Lemma if a contraction produces a variable at least three levels up, we will henceforth always assume that a contraction produces a new variable either one or two levels up.

\begin{lemma}
\label{triplet}
Suppose a triplet in level $k$ can be contracted, or used in a series of contractions, to produce a variable in level $k+2$.  Then (\ref{form}) has a nontrivial zero.
\end{lemma}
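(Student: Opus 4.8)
The plan is to exploit the ``free in a class'' mechanism recorded above together with the simple observation that a triplet occupies all three nonzero classes $1$, $\alpha$, $1+\alpha$ at once. Since every nonzero element of the residue field is the $0$-class of one of the three variables making up the triplet, whatever $0$-class ends up attached to the variable produced at level $k+2$ can be cancelled by the freedom inherited from an originating variable, pushing that variable to level $k+3$ and invoking Hensel's Lemma.

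Concretely, I would argue as follows. Suppose the triplet in level $k$, with variables $x_a$, $x_b$, $x_c$ of $0$-classes $1$, $\alpha$, $1+\alpha$, is carried --- possibly through a series of contractions --- to a variable $v$ at level $k+2$. If $v$ in fact has level $k+3$ or higher, then Lemma~\ref{quartic_hensels_lemma} applies immediately, so assume $v$ has level exactly $k+2$; its $0$-class $e_0$ is then a nonzero element of the residue field, hence $e_0 \in \{1, \alpha, 1+\alpha\}$. Because the chain of contractions began with a contraction of the level-$k$ variables $x_a$, $x_b$, $x_c$ and has since climbed two full levels, $v$ is free in each of the classes $1$, $\alpha$, $1+\alpha$; in particular it is free in the class $e_0$. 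Adding $e_0$ to the $0$-class of $v$ replaces $v$ by a variable of $0$-class $0$, i.e.\ a variable at level at least $k+3$ produced from a contraction (or series of contractions) beginning at level $k$, and Lemma~\ref{quartic_hensels_lemma} yields a nontrivial zero of~(\ref{form}).

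The only point that needs care is the claim that $v$ is simultaneously free in all three nonzero classes irrespective of how the intermediate contractions were performed: the ability to add the $0$-class of an originating level-$k$ variable only materializes once the chain has climbed two levels, so one must check that passing through an intermediate variable at level $k+1$ does not destroy it. This follows by applying the congruence $(a_0 + 2a_1 + 4a_2)(1+4c_1) + (b_0+2b_1+4b_2)(1+4c_2) \equiv (a_0+b_0) + 2(a_1+b_1) + 4(a_2+b_2+c_1a_0+c_2b_0) \pmod 8$ recorded above to each of $x_a$, $x_b$, $x_c$ in turn: multiplying these three variables by the $d$th powers $1+4c_1$, $1+4c_2$, $1+4c_3$ leaves the contributions at levels $k$ and $k+1$ unchanged and shifts the $0$-class of $v$ by $c_1\cdot 1 + c_2\cdot\alpha + c_3\cdot(1+\alpha)$, which ranges over the whole residue field as $c_1,c_2,c_3$ range over $\{0,1\}$. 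Thus any class, and in particular $e_0$, can be added to $v$, and the argument closes. I expect this bookkeeping to be essentially the entire content of the proof; the rest is immediate from the definitions.
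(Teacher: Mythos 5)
Your proof is correct and is essentially the paper's argument: the resulting variable at level $k+2$ is free in all three nonzero classes because the triplet's originating variables occupy all of them, so its class can be cancelled and Hensel's Lemma applies. The extra bookkeeping you supply (the mod $8$ congruence showing the freedom survives the intermediate level) is exactly the mechanism the paper established in the preliminaries and then invokes in one line.
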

\begin{proof}
The resulting variable is free at level $k+2$ in all classes and so we can arrange the contraction so that the new variable is in level at least $k+3$.  A zero follows from Hensel's Lemma.
\end{proof}

Because of this, we will always assume that a triplet contracts just one level up.

\begin{lemma}
\label{twopairs1}
Suppose two variables can be produced in level $k+2$ by performing contractions such that for each of the variables at least one of its originating variables comes from level $k$.  Then (\ref{form}) has a nontrivial zero.
\end{lemma}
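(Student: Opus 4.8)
The plan is to combine the freeness of each of the two level-$(k+2)$ variables---inherited from its originating variable in level $k$---with the fact that the residue field has only four elements. Write $v_1,v_2$ for the two variables produced in level $k+2$, arising from disjoint collections of original variables, and let $p,q$ denote their respective classes. By hypothesis $v_1$ has an originating variable in level $k$, of some (necessarily nonzero) class $a_0$, and likewise $v_2$ has an originating variable in level $k$ of nonzero class $b_0$. By the freeness observation developed in the paragraph preceding Lemma~\ref{triplet}, we may arrange the class of $v_1$ to be $p$ or $p+a_0$, and the class of $v_2$ to be $q$ or $q+b_0$. Since $a_0,b_0\neq 0$, the classes attainable for $v_1$ form a two-element subset $S_1=\{p,\,p+a_0\}$ of the four-element residue field $\mathbb{F}_4$, and those attainable for $v_2$ form a two-element subset $S_2=\{q,\,q+b_0\}$.

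First I would handle the case in which $0\in S_1$ or $0\in S_2$. If, say, $a_0=p$, then freeness lets us take the class of $v_1$ to be $0$; that is, $v_1$ in fact lies in level at least $k+3$. Since $v_1$ is obtained from a series of contractions involving an original variable of level $k$, Lemma~\ref{quartic_hensels_lemma} produces a nontrivial zero, and the argument is symmetric if $0\in S_2$. In the remaining case $0\notin S_1$ and $0\notin S_2$, observe that two disjoint two-element subsets of a four-element set must partition it, so one of them would contain $0$; since neither does, $S_1\cap S_2\neq\emptyset$. Pick $t\in S_1\cap S_2$ and use freeness to put both $v_1$ and $v_2$ in class $t$ at level $k+2$. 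They now form a pair and contract to a variable in level at least $k+3$, and once more this variable is produced by a series of contractions involving an original variable of level $k$, so Lemma~\ref{quartic_hensels_lemma} applies. (If $p=q$ one could of course contract $v_1$ and $v_2$ directly, but this case is subsumed above, since then $p\in S_1\cap S_2$.)

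Since everything reduces to a one-line counting observation in $\mathbb{F}_4$, I do not expect any real obstacle. The only point that genuinely needs care is verifying that the freeness observation preceding Lemma~\ref{triplet} is available for each $v_i$---i.e., that each of $v_1$ and $v_2$ really does have an originating variable lying in level $k$ whose class can be added to the class of the resulting level-$(k+2)$ variable---which is precisely the hypothesis of Lemma~\ref{twopairs1}.
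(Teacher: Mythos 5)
Your proof is correct and follows essentially the same route as the paper's: each resulting variable is free in the (nonzero) class of its level-$k$ originator, so either one of the two attainable-class sets contains $0$ (giving a variable in level at least $k+3$ directly) or the two sets meet (giving a pair in level $k+2$ that contracts at least one level higher), and Hensel's Lemma finishes. The only difference is that you make explicit the counting argument in $\mathbb{F}_4$ that the paper leaves implicit.
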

\begin{proof}
If the two resulting variables are in the same class, then another contraction can be performed, and a zero follows.  Thus, assume they are in different classes.  Because they are free in some class in this level, the contractions can be arranged so that each new variable appears in two possible classes.  Therefore, either both variables can be made to go to the same class, forming a pair which can be contracted at least one level higher, or at least one of the variables goes to class 0, i.e., can be made to go at least one level higher.  In either case, a zero follows from Hensel's Lemma.
\end{proof}

\begin{lemma}
\label{twopairs2}
Suppose two pairs from different classes in level $k$ contract to the same class in level $k+1$, or contract to different classes and can be used to form a triplet.  Then (\ref{form}) has a nontrivial zero.
\end{lemma}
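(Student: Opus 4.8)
The plan is to reduce both alternatives in the hypothesis to a single use of the ``freeness'' principle recorded just before Lemma~\ref{triplet}, exploiting that the two pairs lie in \emph{distinct} nonzero classes. Write the pairs as lying in classes $a$ and $b$ of level $k$, where $a \ne b$ are nonzero; then the third nonzero class is $a+b$ and $\{0,a,b,a+b\}$ is all of $\mathbb{F}_4$. This is the crux: any time we reach a variable in level $k+2$ by a chain of contractions that uses the four level-$k$ variables of these pairs among its originating variables, the freeness principle lets us change that variable's $2$-class by $c_1 a + c_2 b$ with $c_1,c_2 \in \{0,1\}$, hence by an arbitrary element of $\mathbb{F}_4$; choosing the adjustment that kills the $2$-class produces a variable in level $\ge k+3$, and Lemma~\ref{quartic_hensels_lemma} then gives a nontrivial zero. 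It is exactly here that ``from different classes'' is used: a single class would only give freeness in $\{0,a\}$.

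First I would handle the case where the two pairs contract to a common class $e$ in level $k+1$. The two resulting variables then form a pair in level $k+1$ and contract to level $\ge k+2$ (equivalently, the four original variables are a quadruplet). If that variable already lies in level $\ge k+3$, Hensel's Lemma applies directly; otherwise it lies in level exactly $k+2$ and is built from the four level-$k$ variables, so the observation above pushes it to level $\ge k+3$, again finishing via Lemma~\ref{quartic_hensels_lemma}.

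Next I would handle the case where the pairs contract to distinct classes $a',b'$ in level $k+1$ and, together with a third variable in level $k+1$ of class $a'+b'$, form a triplet. Contracting that triplet produces a variable in level $\ge k+2$. If it lands in level $\ge k+3$ we are done (this is also immediate from Lemma~\ref{triplet} applied to the triplet); and if it lands in level exactly $k+2$, then although the third triplet member originates only at level $k+1$, the four variables of the two pairs are still originating variables from level $k$, so the freeness principle applies once more, lets us set the $2$-class to $0$, and Hensel's Lemma finishes.

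The step I expect to require the most care is checking that the freedom in \emph{both} $a$ and $b$ really survives the intermediate contractions. Concretely one must arrange each contraction so that the originating level-$k$ variables are not rescaled --- pairing equal classes in the first case, and taking all unit multipliers to have $0$-class $1$ when forming the triplet in the second, which is legitimate since $a'+b'+(a'+b')=0$ --- so that the $0$-classes of those four variables remain $a,a,b,b$ and the achievable adjustments of the level-$(k+2)$ variable's $2$-class are exactly $\{(c_1+c_2)a+(c_3+c_4)b : c_i \in \{0,1\}\} = \mathbb{F}_4$. Once this bookkeeping is in place, every branch terminates through Lemma~\ref{quartic_hensels_lemma}.
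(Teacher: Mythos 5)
Your proposal is correct and takes essentially the same approach as the paper: both arguments observe that the level-$(k+2)$ variable is free in the two distinct nonzero classes $a$ and $b$ of the originating pairs, hence (since $a+b$ is the third nonzero class of $\mathbb{F}_4$) free in every class, so its $2$-class can be set to $0$ and Hensel's Lemma applies. The paper treats the pair and triplet cases in one sentence where you split them, but the mechanism is identical.
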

\begin{proof}
Assume the resulting pair or triplet in level $k+1$ contracts to level $k+2$.  The new variable is free in at least two different classes, and thus also in the sum of these two classes.  It is therefore free in all three classes and can be made to go up to at least level $k+3$.  A zero follows from Hensel's Lemma.
\end{proof}

\begin{lemma}
\label{contractions}
Here we collect a number of useful consequences.

\begin{enumerate}
    \item If there are at least three variables in a level, then either a pair or a triplet can be contracted up at least one level.
    \item If there are at least four variables in a level, then a pair can be contracted up at least one level.
    \item If there are at least five variables in a level, then either a pair can be contracted up exactly one level or the form has a nontrivial zero.
    \item If there are at least five variables in a class, then either a pair or a quadruplet can be contracted up at least two levels.
    \item If the variables in level $k+1$ occupy at least two classes, and a triplet can be formed in level $k$, then the form has a nontrivial zero.
    \item If the variables in level $k+2$ occupy at least two classes, and a pair in level $k$ can be contracted to level $k+2$, then the form has a nontrivial zero.
    \item If there are four variables in a level in the same 0,1-class and their 2-classes sum to 0 or to their 0-class, then the form has a nontrivial zero.
\end{enumerate}
\end{lemma}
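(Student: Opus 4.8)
The plan is to treat the seven items in order, each time either exhibiting the claimed low-level contraction directly or driving some variable three levels up and invoking Hensel's Lemma (Lemma~\ref{quartic_hensels_lemma}). Items (1) and (2) are pure pigeonhole: among three variables in a level, either two share a class (a pair) or all three nonzero classes occur (a triplet); among four, two must share a class. For (3) I would argue by contradiction: suppose no pair contracts up exactly one level, so every pair present is complementary. A single class cannot then hold three variables, since variables of $0$-class $c_0$ with pairwise complementary $1$-classes $p,q,r$ satisfy $q=c_0+p=r$ and $r=c_0+q$, forcing $p=q=r$ and hence $c_0=0$. Thus five variables in one level must split as $(2,2,1)$, each doubleton class being a complementary pair that contracts up at least to level $k+2$; this produces two variables in level $k+2$, each with an originating variable in level $k$, and Lemma~\ref{twopairs1} gives a zero.

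For (4), fix the common $0$-class $c_0$ and recall that the four possible $1$-classes split into the two complementary pairs $\{0,c_0\}$ and $\{f,f'\}$, the remaining two elements. If two of the five variables realize one of these complementary pairs of $1$-classes they form a complementary pair and contract up two levels. Otherwise all variables whose $1$-class lies in $\{0,c_0\}$ share a single $1$-class, and likewise for $\{f,f'\}$, so the five variables occupy at most two $0,1$-classes; then two disjoint pairs can be chosen, each inside a single $0,1$-class, and contracting each pair yields a variable of $0$-class $c_0$ in level $k+1$. The two outputs share the class $c_0$, hence form a quadruplet, which contracts up one further level.

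Items (5) and (6) share a template and are where the real bookkeeping lives. In (6), contract the pair of level $k$ up to level $k+2$ (if it reaches $k+3$ Hensel's Lemma already applies); by the freeness afforded by the $d$th powers $1$ and $1+4$, the new variable's class may be shifted by $c_0$, so it can be placed in either of two classes, and since any two two-element subsets of the three nonzero classes intersect, it can be placed in a class occupied at level $k+2$; contracting that pair reaches level $k+3$. Item (5) needs one extra move: contract the triplet up to level $k+1$ (Lemma~\ref{triplet} covers the case it reaches $k+2$), and the carry computation shows the output has a forced $0$-class $\gamma$ but a completely free $1$-class; pairing it with a variable of class $\gamma$ in level $k+1$ — one present already, or produced by contracting together variables from the other two occupied classes — yields in level $k+2$ a variable of free $0$-class, which we send to $0$, reaching level $k+3$.

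Finally, for (7), split the four variables of common $0,1$-class $(c_0,c_1)$ into two pairs; each contracts up exactly one level to a variable of $0$-class $c_0$ whose $1$-class equals $c_1+c_2^{(i)}+c_2^{(j)}$ up to the addition of $c_0$. The two outputs, sharing class $c_0$ in level $k+1$, form a pair that is complementary — hence contracts up two more levels, to $k+3$ — exactly when their $1$-classes can be made to sum to $c_0$; since the $c_1$'s cancel, this is possible precisely when $\sum_i c_2^{(i)}\in\{0,c_0\}$, which is the hypothesis, and Hensel's Lemma finishes. The hard part, I expect, is precisely this level of detail: correctly propagating the $0$-, $1$-, and $2$-classes together with their carries through the iterated contractions, and in particular pinning down the freeness assertions used in (5) and (6) and the exact congruence appearing in (7); the combinatorics (the pigeonhole arguments and the intersection-of-$2$-subsets trick) is routine by comparison.
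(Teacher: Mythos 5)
Your proposal is correct, and for items (1)--(5) it is essentially the paper's argument: pigeonhole for (1)--(2); for (3) your contrapositive (no class can hold three pairwise-complementary variables, so the split is $(2,2,1)$ and Lemma~\ref{twopairs1} applies) is the same case analysis the paper runs; for (4) you make explicit the partition of the four $1$-classes into two complementary pairs, which the paper leaves implicit before reducing to two noncomplementary $1$-classes and a quadruplet; and your (5) is exactly the paper's use of Lemma~\ref{triplet} phrased via the free $1$-class of the triplet's output. The only genuine divergences are in (6) and (7). For (6) the paper does not need the freeness shift: it observes that the level-$(k+2)$ arrival either lands in an occupied class (pair) or in the unique unoccupied one (triplet); your shift-by-$c_0$ plus the intersection of two $2$-subsets of a $3$-set is an equally valid one-liner. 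For (7) the paper simply sums all four coefficients, getting $4(a_0+b)\pmod 8$ with $b\in\{0,a_0\}$, and uses freeness in $a_0$ to reach level $k+3$ in one step, whereas you contract two disjoint pairs (each landing in class $c_0$ at level $k+1$ by Lemma~\ref{sameclass}) and arrange the outputs to be complementary; your carry bookkeeping checks out, but the paper's single four-fold sum is shorter and avoids tracking the intermediate $1$-classes.
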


\begin{proof}
\begin{enumerate}
    \item Suppose no class contains two variables.  Then, each class contains at least one variable.  This triplet can be contracted up at least one level.
    \item By the pigeonhole principle, there is a class containing at least two variables.
    \item If any class contains at least three variables, then either two variables lie in the same 1-class, or in a pair of noncomplementary 1-classes, and so the class must contain a noncomplementary pair which contracts up exactly one level.  Thus assume at least two classes contain two variables.  If both pairs contract up two levels, then a zero follows from Lemma \ref{twopairs1}.
    \item Assume the variables are contained in two noncomplementary 1-classes.  Either one of the 1-classes contains at least four variables, or both contain at least two.  These form two pairs which contract to the same class one level higher, and thence at least one more level.
    
    \item By Lemma \ref{triplet}, assume the triplet contracts to level $k+1$.
    If the resulting variable is formed in an occupied class, it is contained in a pair.
    Otherwise, it is contained in a triplet.  In either case, the resulting variable can be used in a contraction resulting in a variable at least one more level up, and a zero follows from Lemma \ref{triplet}.
    
    \item If the resulting variables goes to an occupied class, then a pair can be formed and a zero follows.  Otherwise, a triplet can be formed and a zero follows.
    
    \item The sum of their coefficients is $4a_0 + 8a_1 + 4b \equiv 4(a_0 + b) \pmod{8}$, where $b$ is 0 or $a_0$ by hypothesis.
    Because the resulting variable is free in $a_0$ at level $k+2$, the quadruplet can be contracted to level at least $k+3$.
\end{enumerate}
\end{proof}

\begin{lemma}
\label{sameclass}
A pair contracts to the same class one level up if and only if the variables are in the same 0,1-class.
\end{lemma}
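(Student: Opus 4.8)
The plan is to carry out the single relevant computation. After normalizing, assume the pair lies in level $k$, with coefficients $a = 2^k(a_0 + 2a_1 + 4a_2 + \cdots)$ and $b = 2^k(b_0 + 2b_1 + 4b_2 + \cdots)$; that the two variables form a pair means precisely that $a_0 = b_0$, and hence that $a_0 + b_0 = 2a_0$.

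Next I would look at the contraction of the pair that raises its level, namely the one available because $a_0 + b_0 = 0$ in the residue field: replacing both variables by a common new variable $y$ produces the coefficient
\[
a + b = 2^k\big[(a_0 + b_0) + 2(a_1 + b_1) + 4(a_2 + b_2) + \cdots\big] = 2^{k+1}\big[(a_0 + a_1 + b_1) + 2(\cdots)\big],
\]
where the second equality uses $a_0 + b_0 = 2a_0$. (Applying to either originating variable the freedom of multiplying by a $d$th power, as used elsewhere in this section, alters this only from level $k+2$ onward, by the congruence displayed earlier, so it does not affect the conclusion.) Thus the contracted variable sits in level $k+1$, or higher, and whenever it sits in level exactly $k+1$ its $0$-class is the residue of $a_0 + a_1 + b_1$ in the residue field.

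Finally I would read off the equivalence. That residue equals $a_0$ if and only if $a_1 + b_1 \equiv 0 \pmod 2$, i.e.\ if and only if $a_1 = b_1$; and when $a_1 = b_1$ the residue is $a_0 \neq 0$, so the contracted variable genuinely appears at level $k+1$ rather than higher. Hence the pair contracts to the same class one level up exactly when $a_1 = b_1$, which---since $a_0 = b_0$ already---is precisely the assertion that the two variables lie in the same $0,1$-class. The argument is short and presents no real obstacle; the only point needing a moment's care is confirming, through the congruence above, that the auxiliary $d$th-power freedom used elsewhere cannot perturb the level-$(k+1)$ class.
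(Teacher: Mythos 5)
Your proof is correct and follows essentially the same route as the paper: compute $a+b$ modulo $2^{k+3}$, use $a_0=b_0$ to pull out a factor of $2$, and observe that the new $0$-class is $a_0+a_1+b_1$, which equals $a_0$ exactly when $a_1=b_1$. The extra remark about the $d$th-power freedom not disturbing the level-$(k+1)$ class is harmless but not needed for this lemma.
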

\begin{proof}
Let the two coefficients be $a \equiv a_0 + 2a_1 + 4a_2$ and $b \equiv b_0 + 2b_1 + 4b_2$ modulo $8$.  Assume $a_0 = b_0$.  If $a_1 = b_1$, then $a + b \equiv 2(a_0 + 2(a_1 + a_2 + b_2)) \mod{8}$.  If $a_1 \ne b_1$, then $a_1 + b_1 \ne 0$ and so $a + b \equiv 2((a_0 + a_1 + b_1) + 2(a_2 + b_2)) \mod{8}$ which is in class $a_0 + a_1 + b_1 \ne a_0$. 
\end{proof}

\begin{lemma}
\label{amongthree}
Among three variables in the same class, a pair can be formed which contracts exactly one level up.  If a pair which contracts to the same class one level up cannot be formed, then pairs can be formed which contract to each other class one level up, as well as a pair which contracts at least two levels up.
\end{lemma}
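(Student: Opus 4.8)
The plan is to reduce everything to arithmetic in the four-element residue field $\{0,1,\alpha,1+\alpha\}$. Place the three variables in level $k$, all in the (nonzero) class $a_0$, with $1$-classes $u_1,u_2,u_3$. The one fact I would record at the outset is the effect of contracting the pair with $1$-classes $u_i,u_j$: by the congruence established in the preliminaries (cf.\ the computation in Lemma~\ref{sameclass}), the contracted variable sits at level $k+1$ with $0$-class $a_0$ if $u_i=u_j$, and with $0$-class $a_0+u_i+u_j$ if $u_i\ne u_j$ and $u_i+u_j\ne a_0$; if instead $u_i+u_j=a_0$ the contraction reaches level $k+2$ or higher. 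Thus a pair contracts exactly one level up precisely when it is non-complementary (in the first two cases the resulting $0$-class is nonzero), and it is complementary precisely when $u_i+u_j=a_0$.

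For the first assertion, I would observe that the three pairs cannot all be complementary: $u_1+u_2=u_1+u_3=a_0$ forces $u_2=u_3$, whence $u_2+u_3=0\ne a_0$, a contradiction. Hence some pair is non-complementary, and by the dictionary above it contracts exactly one level up.

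For the second assertion, I would first use Lemma~\ref{sameclass} to translate the hypothesis ``no pair contracts to the same class one level up'' into the statement that $u_1,u_2,u_3$ are pairwise distinct. Then $u_1+u_2,\ u_1+u_3,\ u_2+u_3$ are pairwise distinct and nonzero (distinct because $u_i+u_j=u_i+u_k$ gives $u_j=u_k$, nonzero because the $u_i$ are distinct), so they are exactly the three nonzero classes. Consequently exactly one of them equals $a_0$, and the corresponding pair is complementary and contracts at least two levels up. For each of the other two pairs the contracted variable lands in class $a_0+u_i+u_j$ at level $k+1$, and since $u_i+u_j$ is here a nonzero class different from $a_0$, so is $a_0+u_i+u_j$ (it is nonzero because $u_i+u_j\ne a_0$, and unequal to $a_0$ because $u_i+u_j\ne 0$); as $u_i+u_j$ ranges over the two nonzero classes other than $a_0$, the value $a_0+u_i+u_j$ does so as well, and injectively. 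Hence these two pairs contract one level up, one into each of the two classes different from $a_0$, which together with the complementary pair is exactly the claimed conclusion.

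I do not anticipate a real obstacle: the content is entirely the bookkeeping above, and the only points requiring care are invoking Lemma~\ref{sameclass} to extract pairwise-distinct $1$-classes from the hypothesis, and reading ``each other class'' as the two nonzero classes distinct from the common class $a_0$ of the three variables.
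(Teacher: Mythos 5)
Your proposal is correct and follows essentially the same route as the paper: both reduce the claim to the residue-field computation of Lemma~\ref{sameclass}, observing that for pairwise distinct $1$-classes the three pairwise sums exhaust the nonzero classes, so exactly one pair is complementary and the other two land in the two classes other than the original. Your first-assertion argument (not all three pairs can be complementary) is a minor repackaging of the paper's dichotomy and changes nothing of substance.
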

\begin{proof}
If any two of the variables are in the same 1-class, they contract to a new variable one level up in the same class by Lemma \ref{sameclass}.
Otherwise, all three 1-classes are different, and there are pairs which can be contracted to any class one level up except the original class (including class 0, i.e., the pair contracts two levels up).
\end{proof}

\begin{lemma}
\label{fivein1class}
Suppose there are five variables in the same 0,1-class.  Then a zero follows.
\end{lemma}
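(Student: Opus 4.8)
The plan is to reduce everything to part (7) of Lemma~\ref{contractions}. Suppose the five variables lie in level $k$ and share the common $0$-class $a_0 \neq 0$ and common $1$-class $a_1$; let $c_1,\dots,c_5$ denote their $2$-classes, elements of the four-element residue field. If we can select four indices $i_1,i_2,i_3,i_4$ with $c_{i_1}+c_{i_2}+c_{i_3}+c_{i_4}$ equal to $0$ or to $a_0$, then those four variables satisfy the hypotheses of Lemma~\ref{contractions}(7) and a nontrivial zero follows. So the entire problem becomes the purely combinatorial claim: among any five elements of the residue field there is a sub-collection of four whose sum is $0$ or $a_0$.

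To prove the claim, set $S = c_1 + \cdots + c_5$. Since every element of the residue field has additive order dividing $2$, deleting $c_j$ from the list leaves four elements summing to $S + c_j$. Hence if some $c_j$ lies in the two-element set $\{S,\, S + a_0\}$ we are done, because then $S + c_j \in \{0,\, a_0\}$. Otherwise every $c_j$ lies in the complement of $\{S,\, S+a_0\}$ inside the residue field. Now $\{S,\, S+a_0\}$ is a coset of the order-two subgroup $\{0,\, a_0\}$, so its complement is the other coset of that subgroup and therefore has the form $\{t,\, t + a_0\}$ for some $t$. Thus all five $c_j$ take values in a two-element set $\{t,\, t+a_0\}$.

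It then remains to observe that among five elements of a two-element set $\{t,\, t+a_0\}$ one can always pick four that sum to $0$: if one value occurs at least four times, take four copies of it (four equal elements sum to $0$); otherwise one value occurs exactly three times and the other exactly twice, and taking two copies of each gives sum $t + t + (t+a_0) + (t+a_0) = 0$. This establishes the claim, and hence the lemma via Lemma~\ref{contractions}(7). The argument is essentially just the pigeonhole principle combined with coset bookkeeping in the residue field, so I do not anticipate a serious obstacle; the only point needing a little care is verifying that the forbidden values $\{S,\, S+a_0\}$ genuinely form a coset of $\{0,\, a_0\}$, which is what confines the remaining $2$-classes to a two-element set and makes the final pigeonhole step work.
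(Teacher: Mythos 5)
Your proof is correct, and it takes a genuinely different route from the paper's. The paper argues by direct contraction: pigeonhole gives two variables congruent modulo $8$, which contract to the same $0,1$-class one level up; it then splits into cases according to whether a second such pair exists, and in the harder case uses the three possible sums of the remaining distinct $2$-classes to manufacture a complementary pair, pushing the combined contraction three levels up. You instead reduce the entire lemma to the already-established Lemma~\ref{contractions}(7) via a purely combinatorial statement about the additive group of $\mathbb{F}_4$: among any five $2$-classes one can choose four summing to $0$ or to $a_0$. Your coset argument for that claim is sound (deleting $c_j$ changes the total $S$ to $S+c_j$, and $\{S, S+a_0\}$ is indeed a coset of $\{0,a_0\}$). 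One small observation: your second branch is actually vacuous --- if all five $c_j$ lay in the coset complementary to $\{S,S+a_0\}$, then $S$, being the sum of an odd number of elements of that coset, would itself lie in it, a contradiction --- so the first branch always applies and the pigeonhole finish could be deleted. Your approach buys a shorter, essentially case-free argument that isolates the arithmetic content in Lemma~\ref{contractions}(7); the paper's version stays closer to the explicit contraction style of the surrounding section but has to track $1$-classes of resulting variables and invoke complementarity.
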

\begin{proof}
By the pigeonhole principle, at least two of the variables are in the same 2-class, i.e., they are congruent modulo 8, and so they contract to the same 0,1-class one level up.  If there is another pair in the same 2-class, then both pairs contract one level up to the same 0,1-class.  From there they contract up one more level, again to the same class.  Being free in that class, they can be made to contract at least one level further, and a zero follows from Hensel's Lemma.  Thus assume the remaining three variables are all in different 2-classes, and so there are three possibilities for the sums of their 2-classes, and thus three possibilities for the 1-class of the variable resulting from their contraction.  Choose a pair so that the resulting variable is complementary to the previous resulting variable, and so the four variables together contract at least three levels higher.
\end{proof}

\begin{lemma}
\label{fiveinclass}
Suppose there are five variables in a level in the same class.  Then either there are two pairs among them which contract to distinct classes one level up, or a zero follows.
\end{lemma}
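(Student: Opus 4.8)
The plan is to sort the five variables by their $1$-classes — which lie in the residue field — and to split the argument according to how many distinct $1$-classes occur; throughout, $c_0$ denotes the common ($0$-)class of the five variables. If all five share a $1$-class, they lie in a single $0,1$-class and a zero follows from Lemma~\ref{fivein1class}, so I may assume at least two distinct $1$-classes occur. Two facts will be used repeatedly: by Lemma~\ref{sameclass}, a pair of the five variables in the same $1$-class contracts exactly one level up and lands in class $c_0$; and a pair in distinct $1$-classes $u\ne v$ contracts one level up to class $c_0+u+v$ when $u+v\ne c_0$, and is complementary — hence contracts at least two levels up, where it is free in $c_0$ — when $u+v=c_0$.

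If at least three distinct $1$-classes occur, then among any three of them at most one pairwise sum equals $c_0$ (if $u+v=c_0=u+w$ then $v=w$), so at least two of those three pairs are non-complementary. Since $5>3$, some $1$-class is at least doubly occupied, and in each multiplicity pattern $(3,1,1)$, $(2,2,1)$, $(2,1,1,1)$ one can select two \emph{disjoint} pairs among the five variables realizing two distinct classes one level up — e.g.\ a within-class pair landing in $c_0$ together with a non-complementary mixed pair landing in a class $\ne c_0$, or two mixed pairs sharing only a doubly occupied $1$-class. This yields the first alternative.

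It remains to treat exactly two distinct $1$-classes $u\ne v$, with multiplicities $(4,1)$ or $(3,2)$ (relabel so that $u$ is the majority, hence at least triply occupied). If $u+v\ne c_0$, a within-$u$ pair contracts to $c_0$ and a disjoint mixed pair contracts to $c_0+u+v\ne c_0$, both exactly one level up, and we are done. If $u+v=c_0$, every pair landing at level $k+1$ lands in $c_0$, so the first alternative can only be reached through the complementary mixed pairs, which overshoot to level $k+2$. I would form a within-$u$ pair (class $c_0$, level $k+1$) together with a disjoint complementary mixed pair, which produces a variable at level $\ge k+2$ that is free in $c_0$. Since $\{0,c_0\}$ is an additive subgroup of the residue field with a single other coset, either that variable can be forced into $\{0,c_0\}$ — hence into class $0$, i.e.\ level $\ge k+3$, and a zero follows from Lemma~\ref{quartic_hensels_lemma} — or it lands in one of the two classes outside $\{0,c_0\}$, in particular distinct from the $c_0$ of the within-$u$ pair, so we obtain two pairs whose contractions lie in distinct classes (one exactly one level up, the other at least two). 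In the $(3,2)$ subcase one can do better and get an outright zero: take two disjoint complementary mixed pairs, giving two level-$(\ge k+2)$ variables, each free in $c_0$; then either one reaches class $0$, or — both of the cosets involved being the unique coset outside $\{0,c_0\}$ — both can be driven into a common class and contracted once more to level $\ge k+3$.

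I expect the main obstacle to be the complementary case with multiplicities $(4,1)$: only one complementary mixed pair is available, and with five variables one cannot iterate a second contraction to force level $k+3$, so there is no level-$(k+1)$ realization of the first alternative and the argument genuinely rests on the level-$(k+2)$ variable of the single complementary pair being free in $c_0$, as above. (Lemma~\ref{contractions}(7) supplies an alternative lever, disposing of $(4,1)$ whenever the $2$-classes of the four $u$-variables sum into $\{0,c_0\}$ and leaving exactly the residual sub-case handled by the complementary-pair observation.) The other fiddly point is checking, in the three-$1$-classes case, that the two chosen pairs can always be taken disjoint across every multiplicity pattern.
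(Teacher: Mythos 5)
Most of your case analysis tracks the paper's proof: the reduction via Lemma~\ref{fivein1class}, the disposal of three or more $1$-classes by disjoint pair selection, the non-complementary two-class case, and the $(3,2)$ complementary case (which is exactly the paper's appeal to Lemma~\ref{twopairs2}/\ref{twopairs1}) are all sound. The problem is the subcase you yourself flag as the main obstacle: four variables in $1$-class $u$ and one in the complementary $1$-class $v$ with $u+v=c_0$. There your argument does not close. The single complementary mixed pair lands at level $k+2$ in one of two classes forming a coset of $\{0,c_0\}$; when that coset is $\{0,c_0\}$ you correctly get a zero via Hensel, but when it is the other coset $\{b,c\}$ you end up with one pair contracting to $c_0$ \emph{one} level up and one pair contracting to $b$ or $c$ \emph{two} levels up. That is neither disjunct of the lemma: it is not ``two pairs contracting to distinct classes one level up,'' and it does not yield a zero, since the lemma makes no assumption about occupancy of levels $k+1$ or $k+2$, so there is nothing to combine those two resulting variables with. (This matters downstream: the lemma is invoked elsewhere precisely for the ``two distinct classes one level up'' conclusion.)

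What is actually needed — and what the paper supplies — is a finer analysis of the $2$-classes of the four variables in $u$ to show that the bad coset $\{b,c\}$ cannot survive. Lemma~\ref{contractions}(7) kills the case where those $2$-classes sum to $0$ or $c_0$; in the remaining cases the four variables sit either in three distinct $2$-classes or three-and-one in two $2$-classes, and an explicit mod-$8$ computation shows one can always choose which $u$-variable to pair with the $v$-variable so that the level-$(k{+}2)$ variable lands in $\{0,c_0\}$ (whereupon freeness in $c_0$ forces it to $0$ and Hensel's Lemma applies). Your parenthetical remark attributes the residual case to ``the complementary-pair observation,'' but that observation only covers the $\{0,c_0\}$ coset; the $\{b,c\}$ possibility must be excluded by the $2$-class computation, which your proposal omits.
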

\begin{proof}
Let the class of the variables be $a$.  By Lemma \ref{fivein1class}, assume the variables do not lie all in one 1-class.  By the pigeonhole principle, the 1-class $a_1$ which contains the most variables contains at least two.  If there is a variable in both the complementary 1-class $b_1$ and some noncomplementary 1-class $c_1$, then because the 1-classes $b_1$ and $c_1$ are themselves noncomplementary, the pair in $a_1$ and the pair formed from $b_1$ and $c_1$ go to different classes ($a$ and $a + b_1 + c_1$, respectively) one level up.  If there is a variable in both of the noncomplementary classes $c_1$ and $d_1$, then pairs formed from $a_1$ and $c_1$, and $a_1$ and $d_1$ go to different classes ($a + a_1 + c_1$ and $a + a_1 + d_1$, respectively) one level up.  Thus assume the variables are distributed among exactly two 1-classes.

Suppose there are exactly three variables in $a_1$.  If the other two are in a complementary 1-class, two complementary pairs can be contracted two levels up, and a zero follows from Lemma \ref{twopairs1}.  If the other two are in a noncomplementary 1-class, say $c_1$, then a noncomplementary pair from $a_1$ and a noncomplementary pair formed from $a_1$ and $c_1$ contract to different classes ($a$ and $a + a_1 + c_1$, respectively) one level up.

Thus, assume there are four variables in $a_1$.  If the remaining variable is in a noncomplementary class, say $c_1$, then a pair from $a_1$ and a pair formed from $a_1$ and $c_1$ contract to different classes ($a$ and $a + a_1 + c_1$, respectively) one level up.  Thus assume the remaining variable is in the complementary 1-class $b_1$.

By Lemma \ref{contractions} (7), assume that the 2-classes of the four variables in $a_1$ do not sum to 0 or $a$.  Thus, assume they do not all belong to the same 2-class, they are not distributed among all four of the 2-classes, and they are not distributed two each among two 2-classes.  If they belong to three different 2-classes, then consider the sum (modulo 8) of coefficients of a variable from $a_1$ and a variable from $b_1$, $$(a + 2a_1 + 4c) + (a + 2b_1 + 4b_2) \equiv 2(a + a_1 + b_1 + 2(c + b_2)) \equiv 4(a + c + b_2 + d),$$
where $c$ denotes the choice of 2-class and $a_1 + b_1 \equiv a + 2d \mod 4$.
We may therefore choose c to be either $b_2 + d$ or $a + b_2 + d$, and because the resulting variable is free in $a$, a zero follows.  Thus assume the four variables are distributed three in one 2-class, and one in another.  If the two 2-classes differ by $a$, then the sum of the 2-classes of the four variables is $a$, and a zero follows from Lemma \ref{contractions} (7). Thus a pair of complementary variables can be formed which contracts to two possible classes at least two levels up which don't differ by $a$.  Because $b$ and $c$ differ by $a$, one of the possible classes is either 0 or $a$, and because the resulting variable is free in $a$, a zero follows.
\end{proof}

In the next section we will show the intervening result that eight variables in a single level is sufficient to guarantee the existence of a nontrivial zero.

\section{One Level}

\begin{lemma}
\label{223}
If (\ref{form}) is of type $\left(\substack{2\\2\\3}\right)$, then it has a nontrivial zero.
\end{lemma}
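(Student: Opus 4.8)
The plan is to produce, via a sequence of contractions, a variable at least three levels above its originators and then apply Hensel's Lemma (Lemma~\ref{quartic_hensels_lemma}). Normalize so that all variables lie in level $0$, and write $a,b,c$ for the three nonzero residue classes, so $a+b=c$; the key algebraic fact is that if a variable at level $2$ is \emph{free} in two distinct nonzero classes, then its class can be shifted by $a$, by $b$, and by $a+b$, hence to $0$, so it can be pushed to level $3$. First I would contract the (at least) two variables in class $a$ to a single variable $A$ at level $\ge 1$ and the two in class $b$ to a variable $B$ at level $\ge 1$. If both reach level $\ge 2$, Lemma~\ref{twopairs1} finishes; if exactly one does, say $A$, it is free in class $a$ and I carry it along as a ``free level-$2$'' resource. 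Otherwise $A$ and $B$ sit at level $1$, in classes $X_A$ and $X_B$.

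Next I would apply Lemma~\ref{amongthree} to the three class-$c$ variables. In the first case (their $1$-classes pairwise distinct) one gets a pair among them contracting to level $\ge 2$, and moreover the two noncomplementary pairs contract one level up exactly to classes $a$ and $b$, one each (this uses that exactly one of the three pairs is complementary). Then: if $X_A=X_B$, Lemma~\ref{twopairs2} applies; if $X_A\ne X_B$ with $c\in\{X_A,X_B\}$, complete $A,B$ to a triplet at level $1$ using the class-$c$ pair landing in class $X_A+X_B\in\{a,b\}$ and apply Lemma~\ref{twopairs2}; if $\{X_A,X_B\}=\{a,b\}$, contract the one of $A,B$ lying in class $X_A$ with the class-$c$ pair that reaches the \emph{same} class, obtaining a level-$\ge 2$ variable that is free in class $c$ (via that pair's class-$c$ originators) and in class $a$ or $b$ (via the originators of $A$ or $B$), hence at level $\ge 3$; finally, if one of $A,B$ was the free level-$2$ resource, pair it with the level-$\ge 2$ class-$c$ pair and invoke Lemma~\ref{twopairs1}.

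In the second case of Lemma~\ref{amongthree}, two of the class-$c$ variables, say $c_1,c_2$, share a $1$-class, so $\{c_1,c_2\}$ contracts to a variable $C'$ at level $1$ in class $c$, leaving $c_3$ at level $0$. If $A$ or $B$ lies at level $1$ in class $c$, I contract it with $C'$ and conclude by freeness as before. If $c_3$ is complementary to $c_1$, then $\{c_1,c_3\}$ contracts to level $\ge 2$ and I proceed as in the first case. The remaining situation — $A$ and $B$ at level $1$ in classes inside $\{a,b\}$ and no complementary pair among the class-$c$ variables — is the main obstacle: the class-$c$ triple now supplies neither a level-$\ge 2$ variable (blocking Lemma~\ref{twopairs1}) nor a level-$1$ variable outside $\{a,b,c\}$. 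Here I would re-route, contracting a class-$a$ variable with a class-$b$ variable into class $c$ (or forming a triplet $\{a_1,b_1,c_1\}$ at level $0$), so as either to create five variables in class $c$ at level $0$ and invoke Lemma~\ref{fiveinclass} together with the leftover material and Lemma~\ref{twopairs1}/\ref{twopairs2}, or to build a triplet at level $1$ whose three members come from level-$0$ originators in two distinct classes, again finishing by freeness after contracting that triplet one level up. Checking that this re-routing respects the residue-field bookkeeping in every remaining sub-configuration (in particular when all three class-$c$ variables share a $0,1$-class, where one may also need the $2$-class criterion of Lemma~\ref{contractions}(7) or Lemma~\ref{fivein1class}) is the delicate point, and is where I expect the argument to be longest.
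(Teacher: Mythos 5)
Your individual reductions are mostly sound (the bookkeeping in your Case 1 subcases checks out), but the case analysis does not close, for two reasons.

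First, the ``remaining situation'' you single out as the main obstacle is not an obstacle: there $A$ lies in class $a$ or $b$ at level $1$, $B$ in the other, and $C'$ in class $c$ at level $1$, so $A$, $B$, $C'$ form a triplet and Lemma~\ref{twopairs2} finishes at once (and if $X_A=X_B$ its first clause finishes even faster). The re-routing you propose instead is both unnecessary and dubious as sketched: the two pairs supplied by Lemma~\ref{fiveinclass} come from the \emph{same} class $c$, so they cannot feed Lemma~\ref{twopairs1} or~\ref{twopairs2}, which need pairs from different classes or at level $k+2$.

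Second, and more seriously, one case is simply not covered: exactly one of $A$, $B$ reaches level $2$ (say the class-$a$ pair is complementary), while the three class-$c$ variables occupy only one or two noncomplementary $1$-classes, so that no class-$c$ pair reaches level $2$. Your ``free level-$2$ resource'' is cashed in only against a level-$\ge 2$ class-$c$ pair, which does not exist here; and none of your Case 2 subcases applies when $B$ lands in class $a$ or $b$ at level $1$. This is in fact the crux of the lemma. The paper handles it by a move your proposal lacks: contract a class-$c$ pair to class $c$ at level $1$, then form a level-$0$ triplet from one leftover variable of each class; because the class-$a$ pair is complementary, swapping which class-$a$ variable enters the triplet shifts its target class by $a$, and the two candidate targets $d$ and $d+a$ cannot both avoid $\{0,c\}$, so the triplet reaches level $2$ (directly or through the class-$c$ variable waiting at level $1$) and Lemma~\ref{triplet} applies. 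You need this idea, or a substitute for it, to complete the proof.
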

\begin{proof}
A pair can be formed in each of the classes.
If any two of these pairs contract to the same class one level up, or all three contract to distinct classes one level up, a zero follows from Lemma \ref{twopairs2}.
Thus assume at least one of the possible pairs contracts up exactly two levels.  By Lemma \ref{amongthree}, there is a pair in class $c$ that goes to level 1, and so assume there is a pair of complementary variables in some other class (say, class $a$).

It follows that the sum of the 1-classes of the variables in class $a$ is $a$, and that by Lemma \ref{amongthree} there is a pair in $c$ that contracts to the same class one level up.  After such a contraction, a triplet can be formed with the remaining variables; suppose it contracts to class $d$, with $d \ne 0$ and $d \ne c$.  Then by changing the variable from $a$ which is used, another triplet can be formed which contracts to class $d + a$, and so we assume $d \ne a$. But then $d + a = c$, and so the triplet can be contracted to a class in level 1 containing a variable, and a zero follows from \Cref{triplet}.
\end{proof}

\begin{lemma}
If (\ref{form}) is of type $\left(\substack{1\\3\\3}\right)$, then it has a nontrivial zero.
\end{lemma}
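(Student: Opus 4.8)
The plan is to work entirely inside level $0$, where the hypothesis supplies one variable in class $a$, three in class $b$, and three in class $c$. The strategy is to split a pair off class $b$ and a pair off class $c$, each contracting exactly one level up, while reserving the leftover variable of $b$, the leftover variable of $c$, and the single $a$-variable as a triplet in level $0$.

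Concretely, I would apply \Cref{amongthree} to the three variables of class $b$ to obtain a pair contracting exactly one level up, say into class $e_b$ of level $1$ (necessarily $e_b \neq 0$, since a pair contracting exactly one level cannot land in class $0$), leaving a variable $b'$ in level $0$; likewise \Cref{amongthree} applied to class $c$ yields a pair contracting exactly one level up into some class $e_c \neq 0$ of level $1$, leaving $c'$ in level $0$. The variables $a$, $b'$, $c'$ now form a triplet in level $0$.

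The proof then divides according to whether $e_b = e_c$. If $e_b \neq e_c$, level $1$ is occupied in at least two classes while a triplet remains in level $0$, so \Cref{contractions}(5) yields a nontrivial zero at once. If $e_b = e_c$, the two variables created in level $1$ share a class and contract to a variable $v$ at level at least $2$; should this reach level $\geq 3$ we are done by \Cref{quartic_hensels_lemma}, so assume $v$ lies in level exactly $2$. Because $v$ is produced by a sequence of contractions whose level-$0$ inputs are two variables of class $b$ and two of class $c$, the class of $v$ may be shifted by adding $b$ or by adding $c$; hence $v$ is free in $b$, in $c$, and consequently in $b+c$ and in $0$, i.e.\ free in every class. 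Choosing the class of $v$ to be $0$ places it in level $\geq 3$, and \Cref{quartic_hensels_lemma} completes the argument.

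I do not anticipate a serious obstacle. The case $e_b \neq e_c$ is an immediate appeal to \Cref{contractions}(5), and the only point needing care is the freeness bookkeeping when $e_b = e_c$: verifying that a variable manufactured at level $2$ from the two level-$1$ contractions (equivalently, from four originating level-$0$ variables of classes $b$ and $c$) can have its class adjusted by either $b$ or $c$, which is precisely the \emph{quadruplet} mechanism recorded in the preliminaries.
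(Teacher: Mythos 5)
Your proof is correct and follows essentially the same route as the paper: peel a pair off each of classes $b$ and $c$, leaving the triplet $a$, $b'$, $c'$ in level $0$. The only difference is that the paper uses \Cref{amongthree} to arrange the two pairs to land in \emph{distinct} classes of level $1$ and then invokes \Cref{contractions} (5) immediately, whereas you also admit the coincident case $e_b=e_c$ and dispose of it with the freeness computation --- which is precisely the content of \Cref{twopairs2}, so that subcase could simply have been delegated to that lemma.
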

\begin{proof}
By \Cref{amongthree}, there exists a pair in each of the classes $b$ and $c$ which can be contracted up one level to two distinct classes. The remaining variables form a triplet, and a zero follows from \Cref{contractions} (5).
\end{proof}

\begin{lemma}
If (\ref{form}) is of type $\left(\substack{1\\1\\5}\right)$, then it has a nontrivial zero.
\end{lemma}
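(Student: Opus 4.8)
The plan is to follow the same strategy used for the type $\left(\substack{1\\3\\3}\right)$ lemma: produce, from the large class, two pairs that contract to two distinct classes one level up, so that level $1$ becomes occupied in at least two classes, and then exhibit a triplet among the leftover variables in level $0$ so that \Cref{contractions} (5) finishes the job.

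Concretely, I would first apply \Cref{fiveinclass} to the five variables in class $c$ of level $0$. That lemma gives a dichotomy: either a nontrivial zero already exists (and we are done), or among those five variables there are two pairs that contract to distinct classes one level up. In the latter case I would perform these two contractions; since each pair lies in class $c$ and the two results land in distinct classes of level $1$, level $1$ now contains variables in at least two classes.

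Each of the two pairs uses two of the five class-$c$ variables, for a total of four, so one class-$c$ variable remains unused; together with the (single) variable in class $a$ and the (single) variable in class $b$ of level $0$, these three variables form a triplet in level $0$. Now \Cref{contractions} (5), applied with $k = 0$, immediately yields a nontrivial zero.

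I do not expect a genuine obstacle here: the argument is essentially a bookkeeping check that after spending four of the five class-$c$ variables on the two pairs there are still enough variables left (one from each of the three classes) to assemble a triplet, and that the output of \Cref{fiveinclass} is exactly the hypothesis ``level $k+1$ occupied in at least two classes'' required by \Cref{contractions} (5). The only point to watch is that the two pairs supplied by \Cref{fiveinclass} really contract exactly one level up, into nonzero classes, so that the resulting variables genuinely sit in level $1$ rather than higher — but this is precisely what that lemma's statement guarantees.
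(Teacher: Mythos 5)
Your proof is correct and is essentially identical to the paper's: apply \Cref{fiveinclass} to the five variables in class $c$ to obtain two pairs contracting to distinct classes in level $1$, form a triplet from the three remaining level-$0$ variables, and conclude by \Cref{contractions} (5).
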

\begin{proof}
By Lemma \ref{fiveinclass}, assume two pairs from the five variables in class $c$ can be contracted to two different classes one level up.  A triplet can be formed from the remaining variables, and a zero follows from Lemma \ref{contractions} (5).
\end{proof}

\begin{lemma}
If (\ref{form}) is of type $\left(\substack{0\\4\\4}\right)$, then it has a nontrivial zero.
\end{lemma}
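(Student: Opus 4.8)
The plan is to reduce this case in one step to the already-handled type $\left(\substack{1\\3\\3}\right)$. First I would relabel the classes so that $b$ and $c$ are the two classes guaranteed by the hypothesis to contain at least four variables each, and let $a$ be the third class. The point is that the three nonzero elements of the residue field $\mathbb{F}_4$ sum to zero, so $b+c=a$ and in particular $a\neq 0$. Hence, as recorded in the preliminaries, one variable of class $b$ and one variable of class $c$ lying in the given level can be contracted to a variable whose $0$-class is $b+c=a$.

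The next step is to check that this cross-class contraction does not raise the level. If the two coefficients are congruent to $b+2(\cdots)$ and $c+2(\cdots)$ modulo a suitable power of $2$, their sum is congruent to $(b+c)+2(\cdots)=a+2(\cdots)$, which is a unit multiple of the uniformizer power defining that level precisely because $a\neq 0$. So after expending one variable from class $b$ and one from class $c$, the level contains at least three variables of class $b$, at least three of class $c$, and at least one of class $a$; that is, the form is now of type $\left(\substack{1\\3\\3}\right)$ within a single level. A nontrivial zero then follows from the lemma already proved for that type.

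There is essentially no hard step here; the closest thing to an obstacle is exactly the level-bookkeeping just described, namely verifying that the contracted variable genuinely lands in the same level as the variables that produced it, so that the three classes of the resulting configuration all sit in one level and we really are in type $\left(\substack{1\\3\\3}\right)$ rather than having spread the variables across two levels. This is immediate from $a\neq 0$, and everything else is a direct appeal to the previously established type $\left(\substack{1\\3\\3}\right)$ lemma.
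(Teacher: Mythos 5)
Your proof is correct, but it takes a genuinely different route from the paper's. You reduce $\left(\substack{0\\4\\4}\right)$ to $\left(\substack{1\\3\\3}\right)$ by a single same-level, cross-class contraction: since the three nonzero classes of $\mathbb{F}_4$ sum to zero, a $b$-variable and a $c$-variable contract to a variable of class $b+c=a\neq 0$ in the same level, leaving the configuration $\left(\substack{1\\3\\3}\right)$. This move is explicitly sanctioned in the preliminaries (and used elsewhere, e.g.\ in Lemma \ref{55}), your level bookkeeping is right, and the composite class-$a$ variable is at least as useful as a genuine one (it is in fact free in all three classes at two levels up, since its originating variables lie in classes $b$ and $c$ and $b+c=a$), so the $\left(\substack{1\\3\\3}\right)$ argument goes through unchanged. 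The paper instead argues directly: it takes two pairs from each of $b$ and $c$, and runs a case analysis on which classes in level 1 these four pairs contract to, invoking Lemmas \ref{twopairs1} and \ref{twopairs2}. Your reduction is shorter and cleaner; what the paper's longer argument buys is the property the authors point out after Lemma \ref{007}, namely that the six one-level lemmas are mutually independent, none relying on the truth of another. Your proof sacrifices that independence (it needs the $\left(\substack{1\\3\\3}\right)$ lemma as input), but since that lemma is proved without reference to $\left(\substack{0\\4\\4}\right)$, there is no circularity and the logic is sound.
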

\begin{proof}
By Lemma \ref{amongthree}, there is a pair in $b$ which contracts to some class in level 1, say $d$.  Similarly, there is a pair in $c$ which contracts to $e$ in level 1.  By Lemma \ref{twopairs2}, assume that there is at least one class in level 1 to which no pair contracts, say $f$, and that no pair from $b$ contracts to $e$ or from $c$ to $d$.  If both pairs from $b$ contract to $d$ and both from $c$ contract to $e$, then the two resulting pairs in level 1 can be contracted up at least one more level and a zero follows from Lemma \ref{twopairs1}.  Thus assume at least one pair from one of the classes contracts to level 2.  If the two pairs from the other class contract to the same class in level 1, then the resulting pair can be contracted up at least one more level and a zero again follows from Lemma \ref{twopairs1}.  Thus assume a pair from each of the classes contracts to level 2.  A zero follows again from Lemma \ref{twopairs1}.
\end{proof}

\begin{lemma}
If (\ref{form}) is of type $\left(\substack{0\\2\\5}\right)$, then it has a nontrivial zero.
\end{lemma}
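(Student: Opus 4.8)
The plan is to isolate the seven variables of the distinguished level---say level $0$---which consist of (at least) two variables in a class $b$ and (at least) five variables in a class $c$. I would form the pair $Q$ from the two class-$b$ variables and split the argument according to how far $Q$ contracts, remembering that a pair always contracts at least one level up and that it contracts at least two levels up exactly when the two $1$-classes are complementary.

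Suppose first that $Q$ contracts at least two levels up. If it reaches level $3$ a zero follows from Hensel's Lemma, so assume it produces a variable $w$ in level $2$. Now apply Lemma \ref{contractions} (4) to the five variables of class $c$: a pair or a quadruplet among them contracts at least two levels up, producing (again invoking Hensel's Lemma if level $3$ is reached) a variable $w'$ in level $2$ built from variables disjoint from the two class-$b$ variables used for $w$. Each of $w$ and $w'$ has an originating variable in level $0$, so Lemma \ref{twopairs1} gives a zero.

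It remains to treat the case that $Q$ contracts exactly one level up, to a nonzero class $d_b$ of level $1$. Apply Lemma \ref{fiveinclass} to the five class-$c$ variables: either a zero follows, or there are two disjoint pairs $P_1, P_2$ among them contracting to distinct nonzero classes $d_1 \ne d_2$ of level $1$. Since there are only three nonzero classes, either $d_b \in \{d_1, d_2\}$---in which case $Q$ and the matching $P_i$ are two pairs from different classes ($b$ and $c$) of level $0$ contracting to the same class of level $1$---or $d_b$ is the third nonzero class, in which case the contractions of $Q, P_1, P_2$ constitute a triplet in level $1$, so $Q$ and $P_1$ are two pairs from different classes of level $0$ whose contractions lie in different classes and can be used to form a triplet. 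In either case Lemma \ref{twopairs2} produces a zero.

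I expect no serious obstacle. The only care needed is the bookkeeping that, because classes $b$ and $c$ are disjoint and five variables lie in $c$, the pairs and quadruplets supplied by Lemmas \ref{fiveinclass} and \ref{contractions} (4) genuinely avoid the two class-$b$ variables, so that the hypotheses of Lemmas \ref{twopairs1} and \ref{twopairs2} are literally met. The real content is just routing each configuration into one of the previously established lemmas (Lemmas \ref{twopairs1}, \ref{twopairs2}, and \ref{fiveinclass}).
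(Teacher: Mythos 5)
Your argument is correct and follows essentially the same route as the paper's proof: both reduce to Lemma \ref{fiveinclass} for the five class-$c$ variables, use Lemma \ref{twopairs2} when the class-$b$ pair stops at level 1, and use Lemma \ref{contractions} (4) together with Lemma \ref{twopairs1} when it reaches level 2. The only difference is the order of the case split, plus some extra (correct) bookkeeping on how Lemma \ref{twopairs2} applies.
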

\begin{proof}
By Lemma \ref{fiveinclass}, assume there are two pairs in $c$ which contract to two different classes in level 1.
If the pair in $b$ contract to level 1, then a zero follows from Lemma \ref{twopairs2}.
Thus assume the pair in $b$ contracts to level 2.
By Lemma \ref{contractions} (4), there is a pair or a quadruplet in $c$ which contracts to level 2.
A zero follows from Lemma \ref{twopairs1}.
\end{proof}

\begin{lemma}
\label{007}
If (\ref{form}) is of type $\left(\substack{0\\0\\7}\right)$, then it has a nontrivial zero.
\end{lemma}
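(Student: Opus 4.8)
The plan is to run a case analysis on how the seven variables---all in one level, say level $0$, and all in one class $c$---are distributed among the four $1$-classes of class $c$. Write $a$ and $b$ for the two nonzero classes other than $c$, so that $a+b=c$ in the residue field; within class $c$ the $1$-classes then split into the complementary pairs $\{0,c\}$ and $\{a,b\}$, and, by the definition of complementary, a pair of class-$c$ variables is complementary precisely when its two $1$-classes constitute one of these pairs. First I would dispose of the ``spread'' case: if some complementary pair of $1$-classes contains two variables of each of its two $1$-classes, then two disjoint complementary pairs can be formed, each contracting at least two levels, so two variables are produced in level $2$ each having an originating variable in level $0$, and a zero follows from \Cref{twopairs1} (if either pair happens to contract three or more levels, \Cref{quartic_hensels_lemma} applies directly). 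Hence I may assume no complementary pair is doubly populated on both sides; then at most two of the four $1$-classes are occupied by more than one variable, these two are noncomplementary, and since the other two $1$-classes hold at most one variable each, either the two multiply-occupied $1$-classes have occupation numbers $\{3,3\}$ or $\{3,2\}$, or some single $1$-class carries at least four variables.

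Suppose first that some $1$-class carries at least four variables; these lie in a common $0,1$-class. If five or more do, \Cref{fivein1class} finishes, so assume exactly four, leaving three further class-$c$ variables. By \Cref{contractions} (7) I may assume the $2$-classes of the four variables sum to neither $0$ nor $c$; running through the ways four residue-field elements can be distributed among the four $2$-classes shows this forces a ``$3+1$'' or a ``$2+1+1$'' pattern of $2$-classes. In either pattern two of the four variables are congruent modulo $8$ and so contract to a definite $0,1$-class one level up; I would then combine that new level-$1$ variable with contractions of the three leftover class-$c$ variables---whose reachable classes one level up are described by \Cref{amongthree}---so as to build in level $1$ a pair or a triplet, whence a zero follows from \Cref{twopairs1}, \Cref{twopairs2}, or \Cref{contractions} (5). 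In the exceptional sub-configurations where the reachable classes collapse onto $c$ alone, the three leftover variables instead furnish a complementary pair contracting to level $2$, and one further contraction chosen using the $3$-classes of the four variables---exactly as in the proof of \Cref{fivein1class}---reaches level at least $3$.

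It remains to treat the distributions in which the two multiply-occupied $1$-classes have occupation numbers $\{3,3\}$ or $\{3,2\}$, so that at least one $0,1$-class contains exactly three variables. Among three variables in a common $0,1$-class, every pair contracts to class $c$ one level up (by \Cref{sameclass}), with a $1$-class that can be prescribed within a definite set by the choice of which two of the three to use; applying this to such a triple and \Cref{amongthree} to the remaining variables, I would again produce a pair or a triplet in level $1$ and finish with \Cref{twopairs1}, \Cref{twopairs2}, or \Cref{contractions} (5), the last few sub-configurations being settled by forming complementary pairs into level $2$ and invoking \Cref{contractions} (6).

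The hard part will be the first case---four variables sharing a $0,1$-class with their $2$-classes arranged so that \Cref{contractions} (7) does not apply. Only three variables remain outside that $0,1$-class, and there is no second class available to absorb a stray variable, so one is pushed down to the $2$-class and $3$-class data and must argue as tightly as in \Cref{fivein1class} and \Cref{fiveinclass}; I expect this to be the longest and most delicate portion of the argument.
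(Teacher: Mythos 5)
Your opening split is workable in outline, but it misses the simplification that makes this lemma short, and the residual cases you defer are not actually closed by the lemmas you cite. The paper's first move is: if \emph{any} complementary pair exists in class $c$ (not merely a complementary pair of $1$-classes that is doubly populated on both sides), contract it to level $2$; the five remaining variables still lie in a single class, so \Cref{contractions} (4) produces a second pair or quadruplet reaching level $2$, and \Cref{twopairs1} finishes. Because your case (A) covers only the doubly populated situation, distributions such as $(4,1,1,1)$, $(4,2,1,0)$, $(3,3,1,0)$, $(3,2,1,1)$ --- each of which contains a complementary pair and dies instantly under this argument --- get pushed into your ``hard'' case, where you are forced into the $2$-class and $3$-class analysis that the paper never needs. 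In the genuine residual case, where no complementary pair exists at all, the seven variables lie in two noncomplementary $1$-classes, so three pairs can be formed each within a single $0,1$-class; by \Cref{sameclass} these contract to three variables in class $c$ at level $1$, and \Cref{amongthree} together with freeness in $c$ finishes.

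The concrete gap is in your hard case, where you claim that producing ``a pair or a triplet in level $1$'' yields a zero via \Cref{twopairs1}, \Cref{twopairs2}, or \Cref{contractions} (5). For a pair this is false as stated: two class-$c$ variables at level $1$, each arising from a level-$0$ contraction, contract to a single variable at level $2$ that is free only in $c$; if that variable lands in class $a$ or $b$, adding $c$ does not send it to class $0$, and none of the cited lemmas applies (\Cref{twopairs1} needs two level-$2$ variables, \Cref{twopairs2} needs pairs from different classes at level $0$, and \Cref{contractions} (5) needs a triplet). What is actually needed is \emph{three} class-$c$ variables at level $1$, so that \Cref{amongthree} guarantees either a contraction to class $c$ at level $2$ (whence freeness in $c$ lifts it to level $3$) or a contraction directly to level $3$. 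Finally, the fallback for the ``exceptional sub-configurations'' (``one further contraction chosen using the $3$-classes \ldots exactly as in the proof of \Cref{fivein1class}'') is asserted rather than proved, and you acknowledge as much; as written the proposal does not constitute a proof.
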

\begin{proof}
If a pair in $c$ contracts to level 2, then by Lemma \ref{contractions} (4), another pair or quadruplet contracts to level 2, and a zero follows from Lemma \ref{twopairs1}.
Thus, assume the seven variables are distributed among two noncomplementary 1-classes, and so three pairs can be formed, each having both variables in the same 1-class.
Thus the three pairs can be contracted to three variables in level 1 in class $c$.
By Lemma \ref{amongthree}, either there is a pair among these three that contracts to a variable in class $c$ in level 2 which is free in class $c$ and a zero follows, or there is a pair which contracts at least two levels higher, and a zero follows from Hensel's Lemma.
\end{proof}

It is interesting to note that Lemmas \ref{223} through \ref{007} are independent of each other in the sense that no one relies on the truth of any of the others.

\begin{lemma}
\label{eight}
If any level in (\ref{form}) has at least eight variables, then it has a nontrivial zero.
\end{lemma}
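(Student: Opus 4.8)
The plan is to reduce any single level carrying at least eight variables to one of the six configurations already settled in Lemmas \ref{223} through \ref{007}. First I would normalize: since multiplying the form by a power of the uniformizer cyclically permutes the levels and preserves the existence of a nontrivial zero, I may assume the crowded level is level $0$, and by discarding variables I may assume it holds exactly eight. Every variable in a level belongs to one of the three nonzero classes, so these eight variables determine a partition of $8$ into at most three parts, one part per class.

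Next I would enumerate those partitions. Up to reordering the classes, the partitions of $8$ into at most three parts are
$$8,\quad 7+1,\quad 6+2,\quad 6+1+1,\quad 5+3,\quad 5+2+1,\quad 4+4,\quad 4+3+1,\quad 4+2+2,\quad 3+3+2,$$
ten cases in all. For each I would name a handled type that the partition dominates componentwise (after sorting both in decreasing order): $8$ and $7+1$ contain type $\left(\substack{0\\0\\7}\right)$; $6+2$ and $5+3$ contain $\left(\substack{0\\2\\5}\right)$; $6+1+1$ and $5+2+1$ contain $\left(\substack{1\\1\\5}\right)$; $4+4$ contains $\left(\substack{0\\4\\4}\right)$; $4+3+1$ contains $\left(\substack{1\\3\\3}\right)$; and $4+2+2$ and $3+3+2$ contain $\left(\substack{2\\2\\3}\right)$. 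Since a form whose class distribution dominates one of these listed triples is, by definition, of that type, the corresponding lemma produces a nontrivial zero and the proof is complete.

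I expect no genuine obstacle beyond the bookkeeping of this finite check; the one point that deserves attention is verifying that the six lemmas of the previous section really do exhaust all partitions of $8$, and in particular that the bound of eight variables — rather than seven — is what makes this work: with only seven variables the partition $6+1$ dominates none of the six listed types, so the argument genuinely requires the extra variable to push $7+1$ into type $\left(\substack{0\\0\\7}\right)$. Once the enumeration is laid out and each case matched to its lemma, the conclusion is immediate.
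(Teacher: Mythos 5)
Your proof is correct and is essentially the paper's argument: the paper's proof of Lemma \ref{eight} simply asserts that every distribution of eight variables among the three classes is covered by one of Lemmas \ref{223} through \ref{007}, and your enumeration of the ten partitions of $8$ into at most three parts, each matched to a dominating type, is exactly the verification of that claim. The case-matching is accurate, so nothing further is needed.
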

\begin{proof}
Every distribution of eight variables among three classes is covered by at least one of the cases considered in Lemmas \ref{223} through \ref{007}.
\end{proof}

\begin{theorem}
$\Gamma^*(2m, \mathbb{Q}_2(\sqrt{5})) \le 7d + 1$.
\end{theorem}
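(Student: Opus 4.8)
The plan is to derive this bound immediately from Lemma~\ref{eight} together with the pigeonhole principle. As explained in the Preliminaries, the change of variables $\pi^r x^d = \pi^{r-id}y^d$ lets us replace the level of each variable by its residue modulo $d$ without affecting the existence of a nontrivial zero; after this reduction every variable sits in one of the $d$ levels $0,1,\ldots,d-1$. If $s \ge 7d+1$, then the $s$ variables are distributed among only $d$ levels, so some level must contain at least $\lceil (7d+1)/d \rceil = 8$ variables. By Lemma~\ref{eight}, the form then has a nontrivial zero, and hence $\Gamma^*(2m,\mathbb{Q}_2(\sqrt 5)) \le 7d+1$.

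There is essentially no obstacle at this stage: all of the difficulty has already been absorbed into Lemmas~\ref{223} through~\ref{007}, whose combined content (Lemma~\ref{eight}) is precisely that a single level carrying eight variables forces a zero. The present theorem is just the pigeonhole packaging of that fact, and I do not expect any subtlety beyond observing that $7d$ is exactly the threshold below which the variables can be spread $7$ per level across all $d$ levels, so that $7d+1$ is the first count guaranteeing some level of size at least $8$.

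I would only remark that this bound is far from the $d^2+1$ Artin-type target and even further from the sharp values $\tfrac32 d+1$ and $4d+1$ claimed in the main theorem; it functions as an intermediate ``one level'' result. The later theorems cannot rely on a single overloaded level and will instead have to exploit the interaction among several levels, via the Davenport-type normalization lemma and repeated contraction and complementarity arguments.
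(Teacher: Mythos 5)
Your proof is correct and is essentially identical to the paper's: reduce levels modulo $d$, apply the pigeonhole principle to find a level with at least $\lceil(7d+1)/d\rceil = 8$ variables, and invoke Lemma~\ref{eight}. Nothing further is needed.
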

\begin{proof}
By the pigeonhole principle, there is a level with at least eight variables.  The result follows from \Cref{eight}.
\end{proof}

Next, we consider types which constrain the number of variables in two consecutive levels.

\section{Two Levels}

\begin{lemma}
\label{0061}
If (\ref{form}) is of type $\left(\substack{0\\0\\6}, 1\right)$, then it has a nontrivial zero.
\end{lemma}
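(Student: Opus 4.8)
The plan is to stay inside levels $0$ and $1$ and aim for one of two outcomes: pushing some variable that started in level $0$ up to level $\ge 3$ (a zero then follows from Hensel's Lemma, \Cref{quartic_hensels_lemma}), or producing two variables in level $2$ each built from level-$0$ variables (a zero then follows from \Cref{twopairs1}). Write $c$ for the common class of the six level-$0$ variables and $g$ for the nonzero class of the level-$1$ variable.

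First I would try to partition the six level-$0$ variables into two disjoint triples, each occupying three distinct $1$-classes. If this succeeds, then by \Cref{amongthree} each such triple contains a pair that contracts at least two levels up, and since all its originators lie in class $c$ the resulting variable in level $2$ is free in class $c$; if either reaches level $\ge 3$ we are done, and otherwise both sit in level $2$ with level-$0$ originators and \Cref{twopairs1} finishes. Such a partition fails precisely when some $1$-class contains at least three of the six variables; if it contains five, \Cref{fivein1class} already gives a zero, so we may assume that exactly three or four of the six variables lie in a single $0,1$-class $(c,a_1)$.

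The remaining work is the two cases ``four in $(c,a_1)$'' and ``three in $(c,a_1)$''. In the first, \Cref{contractions}(7) disposes of the sub-case where the four $2$-classes sum to $0$ or to $c$; otherwise I pair the four variables into two variables of class $c$ in level $1$, while the two remaining level-$0$ variables (in $1$-classes $\ne a_1$) are either complementary --- giving directly a second variable in level $2$ for \Cref{twopairs1} --- or contract one level up to a further level-$1$ variable. Together with the level-$1$ variable of class $g$ this yields enough variables in level $1$ to form a triplet (or a pair in a repeated class), whose product lies in level $2$ and is free in class $c$; choosing the triplet and the combined $2$-classes appropriately, one steers this product either into class $0$ or $c$ (hence a level-$3$ variable) or into a configuration that supplies the second level-$2$ variable. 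The case of exactly three variables in $(c,a_1)$ runs along the same lines, with the three leftover level-$0$ variables, via \Cref{amongthree}, providing either a level-$2$ variable directly or the flexibility needed to close the argument.

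I expect the main obstacle to be exactly this last step: once the level-$1$ variables in play are confined to few classes, extracting a \emph{second} level-$2$ variable (or forcing something up to level $3$) demands careful bookkeeping of the $2$-classes of the original level-$0$ variables, repeated appeals to \Cref{contractions}(7), and systematic use of the ``free in class $c$'' mechanism. The single level-$1$ variable earns its keep here by supplying the extra class, or the extra complementary partner, that the residual configurations need.
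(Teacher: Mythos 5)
Your opening reduction is correct: if the six level-$0$ variables can be split into two disjoint triples each meeting three distinct $1$-classes, then each triple contains a complementary pair (the three pairwise sums of distinct $1$-classes exhaust the nonzero residues, so one of them equals $c$), and two disjoint level-$2$ variables with level-$0$ originators finish the job via \Cref{twopairs1}; and such a split fails exactly when some $0,1$-class holds at least three of the six, which \Cref{fivein1class} caps at four. The problem is that the residual cases are not actually proved. Take the concrete configuration: four variables in the $0,1$-class $(c,a_1)$ whose $2$-classes do not sum to $0$ or $c$, and two further variables with $1$-classes $x,y$ satisfying $x+y\ne c$. Your plan puts two variables of class $c$, one of class $e=c+x+y$, and the preexisting variable of class $g$ into level $1$. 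When $c$, $e$, $g$ are pairwise distinct, the pair in $c$ yields only \emph{one} level-$2$ variable, and the triplet consumes one of the two $c$-variables, again yielding only one; the leftover single variable in level $1$ is inert. Neither \Cref{twopairs1} nor the freeness-in-$c$ mechanism closes this case without a genuine computation with the $2$-classes, which you defer (``one steers this product \ldots into a configuration that supplies the second level-$2$ variable'') and explicitly flag as the main obstacle. The same applies, more severely, to the ``three in $(c,a_1)$'' case, which you dispatch with ``runs along the same lines.'' As written, the proof is incomplete.

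The paper avoids these dead ends with a different first dichotomy: either \emph{some} pair among the six is complementary (contracts to level $2$), in which case the remaining four form two pairs whose level-$1$ outputs, together with the preexisting variable, give three variables in level $1$ and hence a second contraction to level $2$; or \emph{no} pair is complementary, which forces all six variables into just two noncomplementary $1$-classes. That structural conclusion is much stronger than ``some $1$-class has at least three'': combined with \Cref{fivein1class} it leaves only the $4{+}2$ and $3{+}3$ distributions, in which every within-$1$-class pair lands in class $c$ at level $1$ by \Cref{sameclass}, so one gets three variables in class $c$ at level $1$ (or two in $c$ and one in a known class $e$), and \Cref{amongthree} plus freeness in $c$ finishes cleanly. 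If you want to salvage your argument, replace your rainbow-triple dichotomy with ``does any complementary pair exist among the six?''---the negative branch then hands you exactly the structure your endgame is missing.
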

\begin{proof}
Let $d$ be the class of the variable in level 1.  If there is a pair in $c$ in level 0 that contracts to level 2, then by \Cref{twopairs1} assume the two remaining pairs contract to level 1, and so either a pair or a triplet contracts from there to level 2.  A zero follows from Lemma \ref{twopairs1}.  Thus, assume no pairs from $c$ contract to level 2.  It follows that the six variables are distributed among two noncomplementary 1-classes.  By Lemma \ref{fivein1class}, assume there are at most four variables in each 1-class.  If the variables are distributed four in one 1-class and two in the other, then there are three pairs that contract to $c$ in level 1, and by Lemma \ref{amongthree} either a pair contracts to a variable in $c$ which is free in $c$ or a pair contracts to at least level 3; in either case a zero follows.  Thus, assume they are distributed three in each of the two noncomplementary 1-classes, and so two pairs can be formed which contract to $c$ in level 1, and one pair which contracts to some other class, say $e$.  If $d = c$, then there are three variables in $c$, and a zero follows as before.  If $d=e$, then the pairs in each of $c$ and $d$ (in level 1) can be contracted to level 2, and a zero follows.  Thus assume $d \ne e$ and $d \ne c$.  We may thus contract a variable from each of $d$ and $e$ (in level 1) to a variable in $c$ (in level 1).  Then there are again three variables in $c$, and a zero follows as before.
\end{proof}

\begin{lemma}
\label{0241}
If (\ref{form}) is of type $\left(\substack{0\\2\\4}, 1\right)$, then it has a nontrivial zero.
\end{lemma}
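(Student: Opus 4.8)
The plan is to play the four variables of class $c$ in level $0$ against the two variables of class $b$ in level $0$ and the single variable in level $1$. Set up notation: let class $b$ carry the two level-$0$ variables, class $c$ the four, and let $g$ be the class of the level-$1$ variable. The two variables in $b$ form one pair, which by \Cref{sameclass} and the complementarity classification either contracts to a single determined class $e$ in level $1$ (with $e=b$ when they share a $0,1$-class) or, when they are complementary, contracts up to level $2$. The four variables in $c$, by contrast, are flexible: they split into two disjoint pairs, and by \Cref{amongthree} applied to any three of them we can either extract a pair in the common $0,1$-class (contracting to $c$ one level up) or form pairs contracting to each of the other two classes one level up, together with a pair contracting at least two levels up. I would case on the behaviour of the $b$-pair.

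First suppose the two variables in $b$ are complementary, so the $b$-pair reaches level $2$. By \Cref{twopairs1} it then suffices to produce any variable at level $2$ from the level-$0$ variables alone. If two of the $1$-classes occurring among the four variables in $c$ sum to $c$, the corresponding complementary pair contracts to level $2$ and we are finished. Otherwise those $1$-classes all lie in a single $1$-class or in two noncomplementary $1$-classes. In the first case the four variables share a $0,1$-class; by \Cref{contractions}~(7) we may assume their $2$-classes sum to neither $0$ nor $c$, and a pigeonhole on the four $2$-classes (in the spirit of \Cref{fivein1class}) then yields either that excluded configuration or two pairs that both contract one level up to a common $0,1$-class in level $1$, whence a further contraction, free in $c$, reaches level $3$. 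If the four variables split between two noncomplementary $1$-classes (as $2+2$ or $3+1$), the within-$1$-class pairs contract to class $c$ in level $1$; combining the resulting variables with the level-$1$ variable in class $g$ puts three variables into level $1$, and \Cref{amongthree} together with Hensel's Lemma (pushing a suitable pair two levels up, where it is free in $c$) closes the argument.

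Now suppose the $b$-pair contracts to a determined class $e$ in level $1$. The $b$-pair lies in class $b$ and any $c$-pair in class $c$, and $b\neq c$, so by \Cref{twopairs2} we are done as soon as some $c$-pair also contracts to $e$ in level $1$; by \Cref{amongthree} this is available whenever $e\neq c$ and the four $c$-variables are not confined to a single $0,1$-class. The leftover possibilities — $e=c$, or that rigid confinement — are handled by bringing in the level-$1$ variable: form a $c$-pair contracting to a class $f$ in level $1$, and choose $f$ so that $\{e,f,g\}$ (or a subcollection, after a further contraction) constitutes a triplet in level $1$, whereupon \Cref{twopairs2} (its second clause) or \Cref{triplet} applies; when the $c$-variables are rigidly confined to one $0,1$-class one again descends to $2$-classes via \Cref{contractions}~(7) and a pigeonhole argument.

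The crux is this rigid configuration, where all four variables of class $c$ share a common $0,1$-class, so that every pair among them contracts only to class $c$ one level up and none reaches level $2$ directly. There the argument has to operate at the level of $2$-classes, combining \Cref{contractions}~(7), a pigeonhole in the manner of \Cref{fivein1class}, and the lone spare variable in level $1$ — much as in the proof of \Cref{0061}. All other branches reduce quickly to \Cref{twopairs1}, \Cref{twopairs2}, \Cref{triplet}, or a direct application of Hensel's Lemma.
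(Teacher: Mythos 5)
Your Case 1 (the $b$-pair complementary, hence reaching level 2) is essentially sound: once one level-2 variable is guaranteed, any second level-2 variable produced from the $c$-variables closes the case via \Cref{twopairs1}, and four variables in class $c$ always yield one (two disjoint pairs to level 1, then one more contraction). A small slip there: when the four $c$-variables share a $0,1$-class, the surviving $2$-class distribution is $3+1$, and the two disjoint pairs land in class $c$ at level 1 but in \emph{different} $1$-classes, so your claimed ``common $0,1$-class, hence free in $c$ up to level 3'' mechanism does not occur; the case still closes, but only because the resulting level-2 variable feeds \Cref{twopairs1}.

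The real problem is Case 2, which is the bulk of the lemma. Your key claim --- that when $e\neq c$ and the four $c$-variables are not confined to a single $0,1$-class, \Cref{amongthree} supplies a $c$-pair contracting to $e$ --- is false. The second clause of \Cref{amongthree} (pairs to every other class) requires three variables in three \emph{distinct} $1$-classes. If the four $c$-variables occupy exactly two noncomplementary $1$-classes $u,w$ (say split $2{+}2$ or $3{+}1$), every triple contains a same-$1$-class pair, and the only classes reachable at level 1 by $c$-pairs are $c$ (within-$1$-class pairs) and $c+u+w$ (cross pairs). Concretely, with $c=\alpha$ and $1$-classes $\{0,1\}$, the reachable classes are $\alpha$ and $1+\alpha$; if $e=1$, no $c$-pair meets $e$. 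This configuration is neither ``$e=c$'' nor ``rigid confinement to one $0,1$-class,'' so it falls outside your list of leftover possibilities, and the subsequent sketch (``choose $f$ so that $\{e,f,g\}$ constitutes a triplet'') does not establish that a suitable $f$ exists, nor does it treat the subcases $e=g$, $g=c$, or a complementary $c$-pair occurring inside Case 2. The paper's proof avoids all of this by first disposing of \emph{any} level-0 pair reaching level 2, then showing that if two $c$-pairs reach distinct classes at level 1 one is done by \Cref{twopairs2}, hence all $c$-pairs go to class $c$ at level 1, and finally running a clean case analysis on the class $d$ of the level-1 variable versus $e$ and $c$ (using the freeness of the contracted variables when $d=c$). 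You would need to supply that analysis --- in particular the $d=c$ case via \Cref{amongthree} plus freeness, and the $e\neq c,\ e\neq d$ case via the triplet clause of \Cref{twopairs2} --- for the proof to be complete; no descent to $2$-classes is actually needed in Case 2.
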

\begin{proof}
Let $d$ be the class of the variable in level 1.  If any of the three pairs in level 0 contracts to level 2, then assume the other two pairs contract to level 1, and so a pair or triplet using one of the resulting variables can be contracted to level 2.  A zero follows from \Cref{twopairs1}.  Thus assume no pair in level 0 contracts to level 2.  If there are two pairs in $c$ that contract to different classes, then after contracting a pair from $b$, a zero follows from Lemma \ref{twopairs2}.  By Lemma \ref{amongthree}, there is a pair in $c$ that contracts to $c$ at level 1, and so assume all pairs in $c$ contract to $c$ at level 1.  If $d = c$, then there are three variables in $c$ at level 1, and by Lemma \ref{amongthree} either a pair contracts to a variable in $c$ at level 2 which is free in $c$ or a pair contracts to at least level 3; in either case a zero follows.  Thus assume $d \ne c$.  Let $e$ be the class to which the pair in $b$ contract to at level 1.  By Lemma \ref{twopairs2}, assume $e \ne c$.  If $e = d$, then contract the pairs in $d$ and $c$ in level 1 to level 2, and a zero follows from Lemma \ref{twopairs1}.  Thus assume $e \ne d$ and $e \ne c$.  A zero follows from Lemma \ref{twopairs2}.
\end{proof}

\begin{lemma}
\label{71}
If (\ref{form}) is of type $\left(7, 1\right)$, then it has a nontrivial zero.
\end{lemma}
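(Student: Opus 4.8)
The plan is to prove Lemma~\ref{71} by reducing it entirely to cases already treated, via a case analysis on how the seven variables guaranteed in level~$0$ split among the three nonzero residue classes. Discarding any surplus, we may assume there are exactly seven variables in level~$0$ and exactly one in level~$1$. The unordered distribution of the seven level-$0$ variables among the three classes is one of the eight partitions $7$, $6+1$, $5+2$, $5+1+1$, $4+3$, $4+2+1$, $3+3+1$, $3+2+2$, and for each I would exhibit a previously established type that the form realizes (after labeling the classes $a,b,c$ to match the prescribed sizes).

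First the partitions that are handled within level~$0$ alone: $7$ gives type $\left(\substack{0\\0\\7}\right)$ (Lemma~\ref{007}); $5+2$, $5+1+1$, $3+3+1$, and $3+2+2$ give types $\left(\substack{0\\2\\5}\right)$, $\left(\substack{1\\1\\5}\right)$, $\left(\substack{1\\3\\3}\right)$, and $\left(\substack{2\\2\\3}\right)$ respectively, each covered by the corresponding lemma of the ``One Level'' section (the last being Lemma~\ref{223}). The three remaining partitions $6+1$, $4+3$, and $4+2+1$ are precisely the ones \emph{not} reachable by any single-level lemma that uses only seven variables, and for these I would use the lone variable in level~$1$: a form realizing $6+1$ in level~$0$ is of type $\left(\substack{0\\0\\6},1\right)$, while one realizing $4+3$ or $4+2+1$ is of type $\left(\substack{0\\2\\4},1\right)$ (place a size-$4$ class in the $c$-slot, a class of size at least $2$ in the $b$-slot, and discard any excess). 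Lemmas~\ref{0061} and~\ref{0241} then supply a nontrivial zero.

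I do not expect a genuine obstacle. The only points requiring care are purely combinatorial: verifying that the eight-partition list is exhaustive (immediate, since there are three classes), and checking that the three ``gap'' partitions genuinely conform to $\left(\substack{0\\0\\6},1\right)$ or $\left(\substack{0\\2\\4},1\right)$ once the class labels are chosen to meet the required thresholds --- the conceptual content being just that the single level-$1$ variable makes up exactly what the seven level-$0$ variables cannot do on their own. Once every partition has been matched to an earlier lemma, the proof is finished.
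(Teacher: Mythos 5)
Your proof is correct and is essentially the paper's own argument: the paper likewise observes that the single-level lemmas leave exactly the distributions $\left(\substack{0\\1\\6}\right)$, $\left(\substack{0\\3\\4}\right)$, and $\left(\substack{1\\2\\4}\right)$ uncovered, and disposes of them via Lemmas~\ref{0061} and~\ref{0241}. Your version merely makes the exhaustive list of partitions and the class-labeling explicit, which the paper leaves implicit.
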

\begin{proof}
By Lemmas \ref{223} through \ref{007}, we only need to examine types  $\left(\substack{0\\1\\6}, 1\right)$, $\left(\substack{0\\3\\4}, 1\right)$, and
$\left(\substack{1\\2\\4}, 1\right)$.  The conclusion follows from Lemmas \ref{0061} and \ref{0241}.
\end{proof}

\begin{lemma}
\label{0225}
If (\ref{form}) is of type $\left(\substack{0\\2\\2}, 5\right)$, then it has a nontrivial zero.
\end{lemma}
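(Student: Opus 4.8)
The plan is to produce two variables at level $2$, each obtained by a chain of contractions that involves at least one variable of level $0$, and then to invoke \Cref{twopairs1} with $k=0$. Since the level-$0$ variables form a pair in class $b$ and a pair in class $c$, this amounts to escorting the $b$-pair up to level $2$ and, on a disjoint set of level-$1$ variables, escorting the $c$-pair up to level $2$.

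First I would contract each level-$0$ pair one level up, obtaining $v_b$ in some class $\beta$ and $v_c$ in some class $\gamma$ at level $1$; if a pair is complementary it already reaches level $2$ (and, using freeness in its class, possibly level $\ge 3$, in which case \Cref{quartic_hensels_lemma} applies at once), which only helps. The $b$-pair and $c$-pair are two pairs from different classes in level $0$: if they contract to the same class ($\beta=\gamma$), or to distinct nonzero classes $\beta\ne\gamma$ while level $1$ contains a variable in the third class (so that a triplet forms in level $1$), then \Cref{twopairs2} gives a zero. If instead $\beta\ne\gamma$ are nonzero and each of the classes $\beta$ and $\gamma$ contains an original level-$1$ variable, pairing $v_b$ and $v_c$ each with such a variable yields two level-$2$ variables of level-$0$ provenance on disjoint supports, and \Cref{twopairs1} finishes. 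In the cases that remain the seven variables now in level $1$ occupy at most two classes with the five originals confined to a single one of them; many of the resulting sub-configurations are already covered by Lemmas \ref{223} through \ref{007} (for instance $\left(\substack{1\\1\\5}\right)$ when $v_b,v_c$ land in two further distinct classes, or $\left(\substack{0\\2\\5}\right)$ when they land together in the class complementary to that of the originals). Up to relabelling, the single configuration that survives is: level $1$ of type $\left(\substack{0\\1\\6}\right)$, with the minority class consisting of one of the contracted variables and the majority class consisting of the five originals together with the other contracted variable.

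This residual $\left(\substack{0\\1\\6}\right)$ configuration is the main obstacle, and I expect it to require the bulk of the work. The key extra tool is a sharper form of the escorting step: a level-$0$ pair together with one level-$1$ variable lying in the $0$-class to which that pair naturally contracts can be contracted \emph{two} levels up --- landing at level $2$, free in the pair's class --- because absorbing that variable makes the pair complementary. Arranging the labels so that the minority-class variable is $v_b$ and the five originals lie in the class $\gamma$ of $v_c$, this produces the first of the two desired level-$2$ variables (from the $c$-pair and one $\gamma$-original, freely in class $c$). For the second one I would analyze the remaining four $\gamma$-variables in level $1$ by their $1$- and $2$-classes: using that the five originals do not all share a $0,1$-class (\Cref{fivein1class}) and tracking $1$-classes via \Cref{sameclass}, \Cref{amongthree}, and \Cref{contractions}, one should show that either the $b$-pair, after absorbing one or two $\gamma$-variables whose $1$-classes sum appropriately, is escorted to level $2$ (so \Cref{twopairs1} finishes), or a pair among the $\gamma$-variables contracts two levels up into a class from which freeness pushes it to level $\ge 3$ (so \Cref{quartic_hensels_lemma} finishes); the stubborn corner in which the $1$-classes of the $\gamma$-variables are confined to a single complementary pair of $1$-classes in $\gamma$ is dealt with by descending one further level and invoking the $2$-class computation of \Cref{contractions} (7), just as in the proof of \Cref{fivein1class}. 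This $1$-class/$2$-class bookkeeping is the crux.
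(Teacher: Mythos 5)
Your reduction to the residual configuration is essentially sound, but the proof is not complete there, and the sketch you give for that configuration contains two concrete errors. In the surviving case, level $1$ holds $v_b$ alone in class $\beta$ and the five originals together with $v_c$ in class $\gamma\ne\beta$, with the third class $\delta=\beta+\gamma$ empty. First, the $b$-pair cannot be ``escorted to level $2$ after absorbing one or two $\gamma$-variables'': absorbing one $\gamma$-variable moves its level-$1$ contraction sideways to class $\delta$, absorbing two moves it back to class $\beta$, and in neither case does the $0$-class vanish, whatever the $1$-classes are. Since every variable other than $v_b$ (and its descendants) lies in class $\gamma$, no pair or triplet through $v_b$ can ever be formed, so no chain involving the $b$-pair leaves level $1$; Option A of your final step is dead, not merely difficult. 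Second, your fallback --- ``a pair among the $\gamma$-variables contracts two levels up \dots so \Cref{quartic_hensels_lemma} finishes'' --- is off by one: a pair of level-$1$ originals reaching level $3$ is only two levels above its originating variables, and Hensel's Lemma as stated requires level $k+3=4$ for $k=1$. Since you explicitly defer the ``$1$-class/$2$-class bookkeeping'' that is supposed to close these cases, the lemma is not proved.

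The case is in fact winnable, but by a different finish, and it is worth seeing how the paper avoids your residual case almost entirely: its first move is a \emph{sideways} contraction at level $0$, replacing one $b$-variable and one $c$-variable by a variable in class $a$ of the same level, which creates a triplet in level $0$; \Cref{contractions} (5) then forces all five level-$1$ variables into a single class before any case analysis begins. In your residual configuration the correct ending needs only \emph{one} level-$2$ variable of level-$0$ provenance, not two: by \Cref{fiveinclass}, two pairs of originals contract to distinct nonzero classes $g\ne h$ at level $2$; the fifth original paired with $v_c$ reaches level $2$ free in class $c$, hence lands in one of two distinct classes $\{e,e+c\}$; and since there are only three nonzero classes, either $0\in\{e,e+c\}$ or $\{e,e+c\}$ meets $\{g,h\}$, giving a contraction to level $3$ with level-$0$ provenance. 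Your insistence on producing two level-$2$ variables for \Cref{twopairs1} is what drives you toward the impossible sub-goal of lifting the $b$-pair.
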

\begin{proof}
First, suppose we contract a pair from $b$ and $c$ to a variable in $a$ (in level 0).  Then a triplet can be formed, and so by Lemma \ref{contractions} (5), assume all of the variables in level 1 are in the same class, say $d$.  By Lemma \ref{fiveinclass}, assume two pairs from level 1 can be contracted to two different classes in level 2.  If a pair from level 0 contracts to level 2, a pair or triplet could be formed with the resulting variable, and a zero would follow.  Thus assume both pairs in level 0 contract to level 1.  By Lemma \ref{twopairs2}, assume they contract to different classes and that one of those classes is $d$.  After contracting two pairs from the preexisting variables in $d$ to two different classes in level 2, there is one preexisting variable and one new variable remaining in $d$.  Contract them to level 2, a pair or triplet can be formed with the resulting variable, and a zero follows.
\end{proof}

\begin{lemma}
\label{0045}
If (\ref{form}) is of type $\left(\substack{0\\0\\4}, 5\right)$, then it has a nontrivial zero.
\end{lemma}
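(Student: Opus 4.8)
The plan is to manufacture, whenever possible, two variables in level $2$ each of which retains an originating variable from level $0$, so that Lemma~\ref{twopairs1} applies; and, when this cannot be forced, to contract the four level-$0$ variables up into level $1$ in controlled classes and reduce either to one of the one-level configurations of Lemmas~\ref{223}--\ref{007} or to an already-treated two-level configuration such as Lemma~\ref{0061} or Lemma~\ref{0225}.

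First I would dispose of the case in which the five level-$1$ variables all lie in a single class. Then Lemma~\ref{fiveinclass} gives that either a zero follows or two of them pair up so that the pairs contract to two distinct classes in level $2$, whence level $2$ is occupied in at least two classes. Now the only complementary pairs of $1$-classes inside class $c$ are $\{0,c\}$ and the pair formed by the two $1$-classes different from $c$; so if the four level-$0$ variables contain a complementary pair it contracts to level $2$ and Lemma~\ref{contractions}(6) finishes, while if they contain no complementary pair their $1$-classes take at most two distinct values.

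It remains to treat the situation where either the five level-$1$ variables occupy at least two classes, or the four level-$0$ variables of class $c$ have no complementary pair and so lie in at most two $1$-classes. If all four of them share a single $0,1$-class, I would invoke Lemma~\ref{contractions}(7): among four residue-field elements the $2$-classes sum to $0$ unless they follow a $3,1$ or a $2,1,1$ pattern, and multiplication by a suitable $d$th power shifts the sum by $c$, so the only obstruction is a $2$-class sum equal to one of the two values other than $0$ and $c$, which I would remove using a level-$1$ variable of class $c$ (if one exists) or the argument of Lemma~\ref{fivein1class}. If instead the four level-$0$ variables lie in exactly two $1$-classes, pairing equal-$1$-class variables gives one or two variables of class $c$ in level $1$; together with the five level-$1$ variables these give up to seven variables in level $1$, which I would try to steer---using the freedom that a pair in class $c$ at level $0$ lands in class $c$ plus the sum of the two $1$-classes used---into one of the distributions $(2,2,3)$, $(1,3,3)$, $(1,1,5)$, $(0,4,4)$, $(0,2,5)$, $(0,0,7)$ handled by Lemmas~\ref{223}--\ref{007}.

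The main obstacle is this last reduction, because seven variables in a single level are \emph{not} in general covered by Lemmas~\ref{223}--\ref{007}: the distributions $(0,1,6)$, $(0,3,4)$ and $(1,2,4)$ escape, so one cannot merely dump everything into level $1$. The delicate point will be to show that whenever the available contractions are forced into one of these three distributions there is still enough residual structure---a complementary pair, or a pair reaching level $2$ from level $0$, or a triplet producible in level $1$---to apply Lemma~\ref{contractions}(5) or (6), or one of Lemmas~\ref{twopairs1}, \ref{twopairs2}, \ref{amongthree}, \ref{fiveinclass} directly. I expect this to require a further split on whether the level-$1$ variables, once they occupy two classes, admit complementary $1$-classes among themselves, and on the $2$-classes of the level-$0$ variables when these are confined to a single $1$-class.
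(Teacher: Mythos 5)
Your proposal is a plan rather than a proof, and the place where you yourself flag the difficulty is exactly where the argument is missing. Concretely: once you decide to contract both level-$0$ pairs into level $1$ and appeal to the one-level lemmas, the residual distributions $\left(\substack{0\\1\\6}\right)$, $\left(\substack{0\\3\\4}\right)$, $\left(\substack{1\\2\\4}\right)$ of seven variables are not covered, and you give no argument for them beyond ``I expect this to require a further split.'' The resource that closes these cases --- and which your sketch never actually deploys --- is that a variable obtained by contracting a pair from level $0$ is \emph{free in class $c$ at level $2$}, so it can be steered into either of two classes there. The paper's proof is organized around this: it splits on whether the five preexisting level-$1$ variables occupy one, two, or three classes; in the two-class case (majority class $d$, minority $e$, empty $f$) it forces both level-$0$ pairs into $e\cup f$, contracts the two resulting variables to level $2$ where the freedom in $c$ lets them land in $a$ or $b$, and then plays \Cref{amongthree} off against itself twice (once on the three-plus variables of class $d$ in level $1$, once on the four class-$c$ variables in level $0$) to reach a contradiction. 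Nothing in your outline substitutes for this step.

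There are two further soft spots. First, in the subcase where all four level-$0$ variables share a $0,1$-class and their $2$-classes sum to a value other than $0$ or $c$, your fix is ``a level-$1$ variable of class $c$ (if one exists) or the argument of \Cref{fivein1class}''; you do not say what happens when no such variable exists, and \Cref{fivein1class} concerns five variables in one $0,1$-class, which is not the configuration at hand. Second, your opening case (all five level-$1$ variables in one class) is resolved much more directly than by hunting for a complementary pair in level $0$: use four of the five to make two pairs reaching distinct classes in level $2$ (\Cref{fiveinclass}), contract both level-$0$ pairs into level $1$, and observe that the one leftover preexisting variable together with the two new ones always yields a pair or triplet reaching level $2$, so the level-$0$ originators rise three levels and Hensel's Lemma applies. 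I would encourage you to restructure the case division around where the level-$0$ pairs land (level $1$ versus level $2$) and around the class-distribution of the five level-$1$ variables, as the paper does, rather than around the $1$-class pattern of the level-$0$ quadruple.
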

\begin{proof}
Suppose one of the pairs in level 0 contracts to level 2.  Then by Lemma \ref{twopairs1} assume the other pair contracts to level 1, and if the variables in level 1 occupy at least two classes, a pair or triplet containing the new variable can be contracted to level 2, and a zero follows from Lemma \ref{twopairs1}.  So, if a pair contracts from level 0 to level 2, assume all of the variables in level 1 are in the same class, and by Lemma \ref{fiveinclass}, that there are two pairs that contract from level 1 to different classes in level 2, a pair or triplet can be formed with the variable originating from level 0, and a zero follows.  Thus assume both pairs in level 0 contract to level 1.

Suppose the preexisting variables in level 1 can be used to form two pairs which contract to distinct classes in level 2.  Then there would be one preexisting variable and two new variables in level 1 remaining, a pair or triplet containing a new variable could be contracted to level 2, and a zero would follow as before.  Thus by Lemma \ref{fiveinclass}, assume the five preexisting variables are not in the same class.

Suppose the preexisting variables in level 1 occupy all three classes.  If the pairs in level 0 contract to two distinct classes in level 1, then they can each be contracted with preexisting variables to level 2, and a zero follows from Lemma \ref{twopairs1}.  If the pairs in level 0 contract to the same class in level 1, then one can be contracted with a preexisting variable in the same class, and the other can be contracted with two preexisting variables in the other two classes, and a zero again follows from Lemma \ref{twopairs1}.  Thus assume the preexisting variables in level 1 occupy exactly two classes.  Call the one with larger number of variables $d$, the other $e$, and the remaining class $f$.

If a pair from level 0 contracts to $d$, then it can be used to form a pair which could be contracted to level 2.  At least one variable would remain in $d$, and so the remaining pair in level 0 could be contracted to level 1 and be used to form a pair or triplet which could be contracted to level 2, and a zero would follow from Lemma \ref{twopairs1}.  Thus assume all pairs from level 0 contract to $e$ or $f$ in level 1, and so the resulting variables can be used to form a pair which contracts to level 2.  The resulting variable is free in $c$ at level 2, and so assume it is contracted to either class $a$ or $b$.  Thus, if a pair from $d$ in level 1 contracts to $a$ or $b$ in level 2, a zero follows.  By Lemma \ref{amongthree}, at least one pair from $d$ in level 1 contracts to $d$ in level 2, and so assume $d=c$.  But by Lemma \ref{amongthree}, at least one pair from $c$ in level 0 contracts to $c$ in level 1.  A zero follows as before.
\end{proof}

\begin{lemma}
\label{55}
If (\ref{form}) is of type $\left(5, 5\right)$, then it has a nontrivial zero.
\end{lemma}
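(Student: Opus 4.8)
The plan is to reduce to the two two-level results just established, \Cref{0225} and \Cref{0045}, each of which requires only that the higher of the two levels contain five variables, with no hypothesis on how those five are distributed among the classes. Thus it is enough to arrange, after performing at most one contraction inside level $0$, that the variables in level $0$ either occupy two classes with at least two variables apiece (so the form is of type $\left(\substack{0\\2\\2},5\right)$) or place at least four variables in one class (so the form is of type $\left(\substack{0\\0\\4},5\right)$).

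First I would list the possible class distributions of the five variables in level $0$; up to relabeling the classes, these are $(5,0,0)$, $(4,1,0)$, $(3,2,0)$, $(3,1,1)$, and $(2,2,1)$. In the first two, some class already contains at least four variables, so the form is of type $\left(\substack{0\\0\\4},5\right)$ and a nontrivial zero follows from \Cref{0045}. In the distributions $(3,2,0)$ and $(2,2,1)$, two classes each contain at least two variables, so the form is of type $\left(\substack{0\\2\\2},5\right)$ and a nontrivial zero follows from \Cref{0225}.

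The only remaining configuration is $(3,1,1)$. Here the two singleton variables sit in two distinct nonzero classes, so by the observation in Section~2 that two variables in a level lying in different nonzero classes contract to a variable in the same level in the third class, they contract to a single variable in level $0$ lying in the class that already contains three variables. After this contraction level $0$ holds four variables in one class while level $1$ is untouched and still holds five variables, so the form is once more of type $\left(\substack{0\\0\\4},5\right)$ and a nontrivial zero follows from \Cref{0045}.

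I do not expect a genuine obstacle here: the substantive work was done in \Cref{0225} and \Cref{0045}, every distribution except $(3,1,1)$ already contains one of the needed sub-types outright, and $(3,1,1)$ is disposed of by a single same-level contraction. The only things that need checking are the routine verifications that the reduced configurations really do meet the hypotheses of \Cref{0225} and \Cref{0045}, and that contracting two variables from different nonzero classes leaves the new variable at the same level, which holds since the sum of two distinct nonzero residue-field elements is again nonzero.
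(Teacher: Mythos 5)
Your proof is correct and follows essentially the same route as the paper: every class distribution of the five level-$0$ variables except $(3,1,1)$ already contains one of the sub-types handled by \Cref{0225} or \Cref{0045}, and the $(3,1,1)$ case is resolved by a single same-level contraction of two variables from different classes into the third class, exactly as in the paper (which contracts into the class of size three to reach $\left(\substack{0\\0\\4},5\right)$, or equivalently into an empty class to reach $\left(\substack{0\\2\\2},5\right)$).
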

\begin{proof}
Suppose (\ref{form}) is of type $\left(\substack{1\\1\\3}, 5\right)$.  Contract a pair from $a$ and $c$ to $b$ (in the same level), or from $a$ and $b$ to $c$ (in the same level).
A zero follows from Lemma \ref{0225} or Lemma \ref{0045}, respectively.  For any other distribution of the variables in level 0, a zero follows from Lemma \ref{0225} or Lemma \ref{0045}.
\end{proof}

\begin{lemma}
\label{37}
If (\ref{form}) is of type $\left(3, 7\right)$, then it has a nontrivial zero.
\end{lemma}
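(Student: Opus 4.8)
The plan is to funnel every configuration into either Lemma~\ref{eight} or Lemma~\ref{71} by performing at most one contraction among the three level-$0$ variables; in particular the argument will never need to examine how the seven level-$1$ variables are spread over the classes.

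First I would split on the distribution of the three level-$0$ variables among the three classes, which is one of $(3,0,0)$, $(2,1,0)$, $(1,1,1)$. If they form a triplet, it contracts at least one level up: reaching level~$1$ gives at least eight variables in level~$1$ and a zero by Lemma~\ref{eight}, while reaching level~$2$ gives a zero by Lemma~\ref{triplet}. If all three lie in one class, Lemma~\ref{amongthree} yields a pair contracting exactly one level up, again producing eight variables in level~$1$, so Lemma~\ref{eight} applies.

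This leaves the case $(2,1,0)$: a pair in some class $a$ and a single variable in some class $b$. Using Lemma~\ref{sameclass} and the definition of a complementary pair, the pair in $a$ either contracts to level~$1$ (the two variables share a $0,1$-class, or lie in noncomplementary $1$-classes) --- whence level~$1$ has eight variables and Lemma~\ref{eight} finishes --- or the pair is complementary and contracts to level~$2$. In the complementary subcase I would perform that one contraction: the form then has one variable in level~$0$, seven in level~$1$, and one in level~$2$; after a cyclic permutation of the levels sending the seven-variable level to level~$0$, the form is of type $(7,1)$, and Lemma~\ref{71} finishes.

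The only step I expect to be more than bookkeeping is the complementary-pair subcase: the temptation is to work directly with the seven variables already in level~$1$, but their induced single-level distribution may be $(6,1,0)$, $(4,3,0)$, or $(4,2,1)$, none of which is covered by Lemmas~\ref{223} through \ref{007}; the resolution is precisely to shift levels and invoke Lemma~\ref{71}, which does cover those distributions (via Lemmas~\ref{0061} and \ref{0241}). Everything else is a direct citation of the contraction lemmas established above.
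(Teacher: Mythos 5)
Your proof is correct, and it is built around the same central move as the paper's: contract a pair from level~$0$ up into level~$1$ so that level~$1$ acquires eight variables and Lemma~\ref{eight} applies. The difference is one of completeness. The paper's proof is a one-line appeal to Lemma~\ref{amongthree}, which as stated only applies to three variables \emph{in the same class}; it silently passes over the distribution $(2,1,0)$ in which the lone pair is complementary, so that no pair can be contracted to exactly level~$1$ and the eight-variable count is never achieved. You isolate precisely that subcase and resolve it differently: contract the complementary pair to level~$2$, cyclically shift levels, and invoke Lemma~\ref{71} on the resulting type $(7,1)$. Your observation that this detour is genuinely needed is accurate --- the induced seven-variable distributions $(6,1,0)$, $(4,3,0)$, $(4,2,1)$ are exactly the ones not covered by Lemmas~\ref{223} through~\ref{007}, and they are exactly the residual types handled in the proof of Lemma~\ref{71}. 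Since Lemma~\ref{71} precedes this statement in the paper, there is no circularity. In short, your argument buys a airtight case analysis at the cost of one extra citation, whereas the paper's version trades that rigor for brevity; the remaining cases ($(1,1,1)$ via Lemma~\ref{triplet} and $(3,0,0)$ via Lemma~\ref{amongthree}) are handled identically in both.
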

\begin{proof}
By Lemma \ref{amongthree}, among the three variables in level 0, a pair can be contracted to level 1.  A zero follows from Lemma \ref{eight}.
\end{proof}

\begin{lemma}
\label{eleven}
If (\ref{form}) has eleven variables in two consecutive levels, a zero follows.
\end{lemma}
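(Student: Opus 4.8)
The plan is to finish by a short case analysis that reduces every possible split of the eleven (or more) variables between the two levels to one of the two-level results already established: Lemma \ref{eight}, Lemma \ref{71}, Lemma \ref{55}, and Lemma \ref{37}. After normalizing so that the two consecutive levels are level $0$ and level $1$, I would let $s_0$ and $s_1$ denote the numbers of variables in each, so that $s_0 + s_1 \ge 11$, and then show that $(s_0,s_1)$ necessarily realizes one of the types handled above.

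First I would dispose of the extreme cases: if $s_0 \ge 8$ or $s_1 \ge 8$, then one level alone carries eight variables and Lemma \ref{eight} finishes. So we may assume $s_0 \le 7$ and $s_1 \le 7$; combined with $s_0 + s_1 \ge 11$ this forces $s_0 \ge 4$ and $s_1 \ge 4$. Next, if $s_0 = 7$, then since $s_1 \ge 4 \ge 1$ the form is of type $\left(7,1\right)$ and Lemma \ref{71} applies; symmetrically, if $s_1 = 7$, then $s_0 \ge 4 \ge 3$, the form is of type $\left(3,7\right)$, and Lemma \ref{37} applies. In the only case left we have $s_0 \le 6$ and $s_1 \le 6$, hence $s_0 \ge 5$ and $s_1 \ge 5$, so the form is of type $\left(5,5\right)$ and Lemma \ref{55} applies.

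There is essentially no obstacle here beyond verifying that the inequalities are tight enough that the four configurations (a level with eight variables, type $\left(7,1\right)$, type $\left(5,5\right)$, type $\left(3,7\right)$) together cover every distribution with $s_0 + s_1 \ge 11$; the distributions they miss all have $s_0 + s_1 \le 10$. All of the genuine combinatorial work has already been carried out in the two-level lemmas (and ultimately in the one-level Lemmas \ref{223} through \ref{007}), so the present statement is just the bookkeeping step that assembles them.
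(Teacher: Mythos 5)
Your proof is correct and takes the same route as the paper: the paper's own proof simply asserts that every distribution of eleven variables across two consecutive levels falls under Lemma \ref{eight}, \ref{71}, \ref{55}, or \ref{37}, and your case analysis is exactly the bookkeeping that verifies this assertion. The explicit check (both levels at most $7$ forces both at least $4$; then the cases $s_0=7$, $s_1=7$, and both at most $6$ land in types $(7,1)$, $(3,7)$, and $(5,5)$ respectively) is sound.
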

\begin{proof}
Every distribution of eleven variables among two consecutive levels is covered by one of Lemmas \ref{eight}, \ref{71}, \ref{55}, and \ref{37}.
\end{proof}

\begin{theorem}
$\Gamma^*(2m, \mathbb{Q}_2(\sqrt{5})) \le 5d + 1$.
\end{theorem}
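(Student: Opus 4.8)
The plan is to reduce the statement immediately to Lemma \ref{eleven} by invoking the Davenport--Lewis normalization lemma stated above. Suppose (\ref{form}) is an additive form of degree $d = 2m$ over $\mathbb{Q}_2(\sqrt{5})$ in $s \ge 5d+1$ variables. After normalizing we may assume the conclusion of that lemma holds; in particular, writing $s_i$ for the number of variables in level $i \pmod d$,
\[
s_0 + s_1 \ge \frac{2s}{d} \ge \frac{2(5d+1)}{d} = 10 + \frac{2}{d}.
\]
Since $m \ge 3$ we have $d \ge 6$, so $0 < 2/d < 1$; as $s_0 + s_1$ is an integer, this forces $s_0 + s_1 \ge 11$.

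Levels $0$ and $1$ are consecutive, so they together contain at least eleven variables in two consecutive levels, and Lemma \ref{eleven} yields a nontrivial zero of (\ref{form}). Therefore every additive form of degree $d$ over $\mathbb{Q}_2(\sqrt{5})$ in at least $5d+1$ variables has a nontrivial zero, which is precisely the assertion $\Gamma^*(2m, \mathbb{Q}_2(\sqrt{5})) \le 5d+1$.

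There is no real obstacle remaining at this stage: all of the combinatorial content has already been packed into Lemma \ref{eleven} (and, below it, Lemmas \ref{eight}, \ref{71}, \ref{55}, and \ref{37}), so the present theorem is a one-line bookkeeping consequence. The only points that need a moment's attention are the rounding step, which relies on $d > 2$ and hence on the hypothesis $m \ge 3$, and the observation that the argument is uniform in $s$: enlarging $s$ beyond $5d+1$ only strengthens the inequality $s_0 + s_1 \ge 2s/d$, so the conclusion persists for every $s \ge 5d+1$.
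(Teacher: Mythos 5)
Your proof is correct and takes essentially the same route as the paper: both reduce to Lemma \ref{eleven} by showing some two consecutive levels must contain eleven variables. The only cosmetic difference is that you invoke the Davenport--Lewis normalization inequality $s_0+s_1\ge 2s/d$, while the paper applies the pigeonhole principle directly to the $m$ disjoint pairs of consecutive levels; both counting arguments are valid and equivalent in substance.
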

\begin{proof}
By the pigeonhole principle (putting $10m+1$ variables in $m$ pairs of consecutive levels), there are two consecutive levels which contain eleven variables.  A zero follows from Lemma \ref{eleven}.
\end{proof}

\begin{lemma}
\label{541}
If (\ref{form}) is of type $\left(5, 4, 1\right)$, then it has a nontrivial zero.
\end{lemma}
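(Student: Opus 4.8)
The plan is to run a case analysis on how the five variables in level~0 are distributed among the three nonzero classes $a,b,c$, much as in the proof of \Cref{71}. In each case we aim either to produce a variable three levels above one of its originating variables, so that Hensel's Lemma (\Cref{quartic_hensels_lemma}) applies, or to reduce the configuration, after contractions and relabelling of the levels, to one of the already-established types: eight variables in a single level (\Cref{eight}), or one of the two-level types of Lemmas~\ref{71}, \ref{55}, \ref{37}, \ref{0061}, \ref{0241}, \ref{0225}, \ref{0045}.

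First I would invoke \Cref{contractions}(3): since there are at least five variables in level~0, either a pair contracts exactly one level up or a zero already follows, so we may assume the form has type $(3,5,1)$. Applying \Cref{contractions}(3) again to the five variables now in level~1, we may assume a pair there contracts exactly one level up, reaching type $(3,3,2)$; when instead the five variables of level~1 all lie in one class I would use \Cref{fiveinclass} (together with \Cref{fivein1class}) to obtain two pairs in level~1 that contract to distinct classes in level~2. Throughout, the variable originally in level~2 --- say it has class $g$ --- serves as a fixed target: by the ``free class'' mechanism, whenever a chain of contractions that begins with level-0 variables produces a variable in level~2, that variable may be taken in either of two classes differing by the class of one of its originating level-0 variables. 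If one of those classes is $0$, the variable reaches level~3 and Hensel applies; otherwise, using that any two distinct nonzero classes of the residue field sum to the third, I would try to force that class to equal $g$, so that the resulting pair in level~2 contracts to level~3 --- three levels above a level-0 variable --- and Hensel applies again.

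When level~0 concentrates its variables, \Cref{amongthree}, \Cref{fiveinclass}, and parts (5)--(7) of \Cref{contractions} do most of the work; for instance if all five level-0 variables share a class, \Cref{fiveinclass} produces two pairs contracting to distinct classes in level~1, and \Cref{contractions}(5) then closes the case once a triplet can be assembled in level~0. The main obstacle, as in the proofs of \Cref{0045} and \Cref{71}, is the small number of residual sub-cases in which the classes of the preexisting level-1 and level-2 variables line up so that no chain of contractions from level~0 can land a variable in level~2 that matches $g$ or reaches $0$; these I expect to demand the most careful bookkeeping of which classes each intermediate variable is free in, and to be dispatched either by exhibiting a complementary pair in level~0 that contracts directly into level~2 alongside the class-$g$ variable, or by relabelling the levels so that the configuration becomes an instance of \Cref{0045} or \Cref{0241}.
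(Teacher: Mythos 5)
Your proposal assembles several of the right tools (freeness of a contracted variable in level~2, the preexisting level-2 variable as a target, \Cref{twopairs1}, \Cref{amongthree}), but it has a genuine gap, and its one concrete reduction is a dead end. Contracting a pair out of level~0 and then a pair out of level~1 to reach ``type $(3,3,2)$'' does not help: no established lemma covers eight variables distributed $3,3,2$ over three levels (the two-level results require eleven variables, or the specific types $(7,1)$, $(5,5)$, $(3,7)$), and, worse, relabelling the form as a new type erases the provenance of the contracted variables. Both Hensel's Lemma and \Cref{twopairs1} are statements about levels \emph{relative to an originating variable}: a level-2 variable descended from level~0 only needs to reach level~3, whereas a preexisting level-2 variable would need to reach level~5. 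Your fallback for the case where level~1 lies in a single class suffers from exactly this defect --- two pairs of preexisting level-1 variables contracting to level~2 and thence to level~3 is only two levels of ascent and yields nothing. Finally, the cases you defer (``the small number of residual sub-cases \dots which I expect \dots to be dispatched'') are precisely where the entire difficulty of the lemma lives; a proof cannot leave them to expectation.

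For comparison, the paper's argument never relabels. It first shows that some chain of contractions beginning in level~0 produces a level-2 variable that can be placed in either of two classes; ruling out class~$0$ and any occupied class forces level~2 to have exactly one occupied class $a$, with the producible variable free in $a$. It then rules out a triplet or two disjoint pairs in level~1 (via \Cref{triplet}, \Cref{twopairs1}, \Cref{amongthree}), pinning the four level-1 variables to a $3{+}1$ split with the three in class $a$; \Cref{contractions}~(5) then forbids a triplet in level~0, forcing at least four of its five variables into one class, and \Cref{amongthree} together with \Cref{twopairs1} finishes. If you want to salvage your approach, you must carry out this kind of explicit class bookkeeping while retaining the origin of every contracted variable, rather than reducing to a type and appealing to prior lemmas.
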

\begin{proof}
Two contractions can be performed from level 0.
Either one of the resulting variables goes to level 2, or a contraction can be performed in level 1 using at least one of the resulting variables and at most one of the preexisting variables, producing a variable in level 2.
(Note that we may choose any of the preexisting variables to potentially be used in this contraction.)
By Hensel's Lemma, we assume the resulting variable goes to an unoccupied class.
The variable is free at level 2 in some class, say $a$.
Thus, assume that a variable can be produced in level 2 in either class $b$ or class $c$ and that the only occupied class in level 2 is $a$.

Further, assume that if a contraction using the variables in level 1 produces a variable in level 2, then that variable goes to class $a$.
Thus, if a triplet can be formed in level 1, then it either contracts directly to level 3, or to class $a$ in level 2 and thence to level 3, and a zero follows from \Cref{triplet}.  Similarly, suppose two pairs can be formed in level 1.
If at least one goes to level 3, the other can be contracted to level 3 and a zero follows from \Cref{twopairs1}.  If both pairs go to class $a$ in level 2, then a zero follows from \Cref{amongthree}.
Therefore, assume the variables in level 1 are distributed so that one class is empty, one class has exactly one variables, and the remaining class has exactly three.  By \Cref{amongthree}, assume the class containing three variables is $a$.

Now, by \Cref{contractions} (5), assume a triplet cannot be formed in level 0.
If two classes in level 0 contain at least two variables, then a pair of variables from each class can be contracted to the third class, and a triplet could be formed.
Thus, assume at least four variables are in class $a$, and so by \Cref{amongthree}, at least one pair contracts to class $a$ in level 1 or to level 2.
Another pair contracts from level 0 to level 2, or to level 1 and thence to level 2, and a zero follows from \Cref{twopairs1}.

%If all classes in level 1 are occupied or two of the classes contain at least two variables, then two variables can be produced in level 2 which originate from level 0, and a zero follows from Lemma \ref{twopairs1}.  Thus assume one of the classes in level 1 contains at least three variables.  Because a variable originating from level 0 can be produced in level 2, assume that every pair from this class contracts to the occupied class $a$ in level 2, and so by Lemma \ref{amongthree}, assume the class in level 1 which contains at least three variables is $a$.

\end{proof}

\begin{theorem}
\label{divupper}
$\Gamma^*(2m, \mathbb{Q}_2(\sqrt{5})) \le 4d + 1$.
\end{theorem}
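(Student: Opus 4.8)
The plan is to combine the normalization of the preliminary change‑of‑variables lemma with the structural obstructions already in hand: Lemma~\ref{eight} (eight variables in one level), Lemma~\ref{eleven} (eleven in two consecutive levels), Lemma~\ref{71} (type $(7,1)$), and Lemma~\ref{541} (type $(5,4,1)$). Suppose (\ref{form}) has $s=4d+1$ variables and no nontrivial zero. By the normalization lemma we may assume $s_0+\dots+s_k\ge (k+1)(4d+1)/d$ for all $k$; since $d\ge 6$ the fractional parts force
\[
s_0\ge 5,\qquad s_0+s_1\ge 9,\qquad s_0+s_1+s_2\ge 13,\qquad s_0+s_1+s_2+s_3\ge 17,\qquad \textstyle\sum_{i=0}^{4}s_i\ge 21 .
\]
On the other hand, the absence of a zero gives $s_i\le 7$ for all $i$ (Lemma~\ref{eight}), $s_i+s_{i+1}\le 10$ for all $i$ (Lemma~\ref{eleven}), no cyclic occurrence of the pattern $(7,1)$ (Lemma~\ref{71}), and no cyclic occurrence of $(5,4,1)$ (Lemma~\ref{541}).

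From $s_0+s_1\le 10$ and $s_0+s_1+s_2\ge 13$ we get $s_2\ge 3$; hence $s_1\le 6$ (otherwise $(7,1)$ appears at level $1$), and if $s_0=7$ then $s_1=0$ to avoid $(7,1)$, contradicting $s_0+s_1\ge 9$, so $s_0\le 6$. Thus $s_0\in\{5,6\}$ and $s_1\ge 9-s_0\ge 3$. If $s_1\ge 4$ then $(s_0,s_1,s_2)$ contains type $(5,4,1)$, a contradiction; so $s_1=3$, which forces $s_0=6$ and then $s_2\ge 4$. Therefore the only configuration not already excluded is $s_0=6$, $s_1=3$, $s_2\ge 4$, where one checks further that $s_2\le 6$, $s_3\ge 8-s_2\ge 2$, and $s_4\ge 21-9-10=2$; it remains to refute this.

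For the endgame I would contract out of level $0$. Since $s_0=6\ge 5$, Lemma~\ref{contractions}(3) lets a pair be contracted exactly one level up, producing counts $(4,4,s_2,s_3,\dots)$; a second pair from level $0$ then reaches level $1$ or level $2$ by Lemma~\ref{contractions}(2), and when it reaches level $1$—now holding five variables—Lemma~\ref{contractions}(3) sends a pair from there exactly one level up. In every branch we obtain a form with at least $s_2+1\ge 5$ variables in level $2$, with $s_3$ and $s_4$ unchanged; if $s_3\ge 4$ this is type $(5,4,1)$ based at level $2$ (using $s_4\ge 2\ge 1$), a contradiction with Lemma~\ref{541}. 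Since $s_2+s_3\ge 8$, this already disposes of $s_2=4$, leaving $s_2\in\{5,6\}$ with $s_3\le 3$. In these remaining cases I would first apply Lemma~\ref{contractions}(3) inside level $2$ to move a pair to level $3$ (lowering $s_2$ to $4$ or $3$ while raising $s_3$), observe that the refreshed normalization inequalities then give either $s_3\ge 4$ or $s_4\ge 4$ with $s_3\ge 3$, and re‑enter the argument above—possibly after one more contraction into level $3$—to produce type $(5,4,1)$ (or, if a pair is forced onto level $0$ from level $d-1$, type $(7,1)$).

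I expect this last step to be the main obstacle. The distribution $(6,3,\ge 4,\dots)$ is the unique one surviving the four global lemmas, and closing it off is a careful choreography of contractions rather than a single hard idea: one must keep some level stocked with at least five variables so that Lemma~\ref{contractions}(3) \emph{forces} a pair exactly one level up, and one must neutralize those contractions that insist on rising two levels, for which Lemma~\ref{contractions}(6) (a pair reaching level $k+2$ where two classes are already occupied), Lemma~\ref{fiveinclass}, and Lemma~\ref{twopairs1} are the relevant tools. The bookkeeping in the subcase $s_2\in\{5,6\}$, $s_3\le 3$—where $s_2$ must be shifted into level $3$ before the basic maneuver applies—is where the real work lies.
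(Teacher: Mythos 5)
Your reduction to the single surviving distribution $s_0=6$, $s_1=3$, $s_2\ge4$ is correct and is essentially the paper's own first step (the paper reaches the same configuration from normalization together with Lemmas \ref{eight}, \ref{71}, \ref{55} and \ref{541}). The gap is in the endgame, and you have correctly located it yourself: the subcase $s_2\in\{5,6\}$, $s_3\le3$ is not actually resolved by your sketch. The recursion you propose (shift a pair from level 2 into level 3, refresh the normalization inequalities, re-enter the argument) is not shown to terminate; for instance with $s_2=5$, $s_3=3$ one gets $s_4\ge4$, and your Case A leaves levels $2,3,4$ in the shape $(6,3,\ge4)$ --- exactly the configuration you started from, one level higher --- so the argument can cycle around the $d$ levels without closing.

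The missing idea is that you should never let the second contraction out of level 0 escape to level 2. The paper shows, by a short case analysis on how the six level-0 variables are distributed among the three residue classes (all classes even: three pairs, of which at most one may reach level 2 by Lemma \ref{twopairs1}; a class with five variables, or two classes with three: two applications of Lemma \ref{amongthree}; distribution $3,2,1$: one pair via Lemma \ref{amongthree} plus one triplet via Lemma \ref{triplet}), that two contractions can always be arranged to land \emph{exactly} in level 1. Level 1 then holds five variables, and the form is of type $\left(5,5\right)$ at levels $1,2$ when $s_2\ge5$ (Lemma \ref{55}), or of type $\left(5,4,1\right)$ at levels $1,2,3$ when $s_2=4$ (since then $s_3\ge4\ge1$), finishing the proof. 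Your framework discards exactly the tool that makes $s_2\ge5$ immediate, because you retained only the count $s_i+s_{i+1}\le10$ from Lemma \ref{eleven} rather than the distributional statement of Lemma \ref{55}; with that lemma restored and both contractions steered into level 1, your argument closes along the paper's lines.
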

\begin{proof}
By normalization, level 0 has at least five variables, and by Lemma \ref{eight}, at most seven.  By normalization, level 1 has at least one variable, and so by Lemma \ref{71} assume level 0 has five or six.  First suppose it has five.  Then by Lemma \ref{55} and normalization, level 1 has four.  By normalization, level 2 has at least one variable, and a zero follows from \Cref{541}.

%By Lemma \ref{contractions} (3), a pair from level 0 can be contracted to level 1, and so by Lemma \ref{55} and normalization, level 2 has four variables.  Continuing in this way, all levels $1, 2, \ldots, d-1$ contain exactly four variables.  Contracting a pair from level $d-2$ to level $d-1$ results in levels $d-1$ and $d \equiv 0 \mod d$ both containing at least five variables, and a zero follows from Lemma \ref{55}.

Thus suppose level 0 has six variables, and so by Lemma \ref{55} and normalization, level 1 has either three or four.  If it has four, then by normalization level 2 has at least one variables, and a zero follows from \Cref{541}.  Thus assume level 1 has three variables, and so by normalization that level 2 has at least four variables.
If each class in level 0 contains an even number of variables, then three pairs can be formed.  By \Cref{twopairs1} assume two of the pairs contract to level 1.  Otherwise, two classes in level 0 contain an odd number of variables.  If any class contains five variables, or two classes contain three variables, then by two applications of  \Cref{amongthree} two pairs can be contracted to level 1.  Otherwise, the number of variables among the classes are three, two, and one, and so from the class with three variables, a pair can be contracted to level 1 by \Cref{amongthree}, and a triplet can be formed from the remaining variables which can be contracted to level 1 by \Cref{triplet}.  In each case, this results in five variables in level 1.  If level 2 contains exactly four variables, then by normalization level 3 contains at least one, and a zero follows from \Cref{541},  and if level 2 contains at least five variables, a zero follows from \Cref{55}.

\end{proof}

It is interesting to note (though it is not proven here) that each of the types presented in the lemmas above is minimal in the sense that any type produced by reducing any of the numbers defining the type contains anisotropic forms.

\section{$3 \nmid d$}
\label{notdivisible}
We now turn our attention to the case when $d$ is not divisible by three.  Because the residue field is $\mathbb{F}_4$ and $3 \nmid d$, every element modulo $2$ is a $d$\textsuperscript{th} power.
Thus there is a change of variables which replaces any given variable with another in any nonzero class in the  same level.
In particular, it follows in effect that any two variables in a given level can be contracted up at least one level.

\begin{lemma}
\label{211}
If (\ref{form}) is of type $\left(2,1,1\right)$ and $3 \nmid d$, then it has a nontrivial zero.
\end{lemma}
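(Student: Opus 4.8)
The plan is to climb three levels above a level-$0$ variable by a short cascade of contractions and then invoke Hensel's Lemma (\Cref{quartic_hensels_lemma}). The only fact needed beyond the type hypothesis is the observation recorded immediately before the lemma: because $3\nmid d$, any two variables lying in a common level can be contracted at least one level higher. This applies equally to variables produced by earlier contractions, since any such variable still has a coefficient whose $0$-class is a nonzero element of $\mathbb{F}_4$, hence a $d$th power. Of the variables guaranteed by the type, I would use two in level $0$, call them $u$ and $v$, one in level $1$, call it $w$, and one in level $2$, call it $z$.

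First I would contract $u$ and $v$ into a single variable $p$ lying in level at least $1$. If $p$ lies in level $3$ or higher, then $u$ (a level-$0$ variable) has been used in a series of contractions producing a variable at level at least $3$, so \Cref{quartic_hensels_lemma} finishes. If $p$ lies in level $1$, I would next contract $p$ with $w$ into a variable $q$ in level at least $2$; if $q$ reaches level $3$ or higher then again, since $q$ arises from contractions involving the level-$0$ variable $u$, \Cref{quartic_hensels_lemma} finishes.

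In the cases that remain we have produced, through a series of contractions involving $u$, a variable lying in level exactly $2$ --- either $p$ itself (when $p$ landed in level $2$) or $q$ (when $p$ landed in level $1$ and $q$ in level $2$). Contracting this variable with $z$, which also lies in level $2$, produces a variable in level at least $3$; tracing back to $u$ and applying \Cref{quartic_hensels_lemma} once more gives a nontrivial zero.

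I do not anticipate a real obstacle: the content is entirely bookkeeping. The two points needing care are ensuring at each stage that the two variables being contracted genuinely share a level --- which is precisely why the type $(2,1,1)$ is what is needed, since if $u,v$ stall in level $1$ they can be pushed on by $w$, and their eventual image meets $z$ in level $2$ --- and noting that the originating variable fed into Hensel's Lemma is always $u$ in level $0$, so a climb to level $3$, rather than to level $k+3$ for some larger $k$, is all that is ever required. Since $d\ge 6$, the levels $0,1,2,3$ are distinct modulo $d$, so no wraparound issue arises.
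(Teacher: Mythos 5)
Your proposal is correct and follows the same route as the paper: contract the level-0 pair, then use the occupied levels 1 and 2 (via the observation that for $3\nmid d$ any two variables sharing a level can be contracted upward) to push the result to level at least 3, and finish with Hensel's Lemma. Your version merely spells out the case analysis that the paper's one-line proof leaves implicit.
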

\begin{proof}
Contract a pair from level 0.  Because levels 1 and 2 are occupied, subsequent contractions can be performed which result in a variable in level at least 3.
\end{proof}

\begin{lemma}
\label{31}
If (\ref{form}) is of type $\left(3,1\right)$ and $3 \nmid d$, then it has a nontrivial zero.
\end{lemma}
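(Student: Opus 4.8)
The plan is to exploit the observation recorded just before \Cref{211}: since $3 \nmid d$, any variable may be moved to any nonzero class of its level, and consequently any two variables in a common level may be contracted up at least one level. Write $v_1, v_2, v_3$ for three variables in level $0$ and $w$ for a variable in level $1$. The crucial preliminary move is to arrange for all three level-$0$ variables to serve as ``originators'' of whatever gets produced higher up: place $v_1$ in a nonzero class $a$ and $v_3$ in a distinct nonzero class $b$, contract them within level $0$ to a single variable $v'$ in the remaining class $c = a + b$, then place $v_2$ in class $c$ and contract the pair $v', v_2$ up at least one level. Any variable subsequently produced at level $\ge 2$ through this chain then has $v_1, v_2, v_3$ among its level-$0$ originating variables, and since $\{a, b, c\}$ is exactly the set of nonzero elements of $\mathbb{F}_4$, the freeness principle lets us add one of these classes to the class of the produced variable and thereby realize any class of $\mathbb{F}_4$ — in particular class $0$, i.e.\ push it one further level up.

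With this setup I would argue by cases on the level reached by the pair $v', v_2$. If it contracts to level $\ge 3$, then $v_1$ (at level $0$) participates in a series of contractions producing a variable at level $\ge 3$, so \Cref{quartic_hensels_lemma} yields a nontrivial zero. If it contracts to exactly level $2$, the resulting variable is free in each of $a, b, c$, so its class can be adjusted to $0$; the contraction then actually produces a variable at level $\ge 3$, and \Cref{quartic_hensels_lemma} again applies. If it contracts to exactly level $1$ (necessarily in some nonzero class), I would use $w$ as a bridge: moving $w$ into that class and contracting the two level-$1$ variables up at least one level to reach level $\ge 2$. Reaching level $\ge 3$ finishes via \Cref{quartic_hensels_lemma} through $v_1$; reaching exactly level $2$ finishes because the produced variable still has $v_1, v_2, v_3$ (classes $a, b, c$) among its level-$0$ originators — the intervening level-$1$ variable $w$ contributes nothing to the level-$2$ coefficient, since $4$ times a level-$1$ unit is $\equiv 0 \pmod 8$ — so it is free in all of $\mathbb{F}_4$ and can be lifted one more level.

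The main obstacle, and the point where this differs from the $(2,1,1)$ argument of \Cref{211}, is that here there is no variable in level $2$ on which to ``climb''. I expect the crux to be recognizing that two of the three level-$0$ variables should first be merged sideways into the third class, so that all three are counted as level-$0$ originators of the final variable: a single pair contraction grants freeness in only one class, which does not suffice, whereas invoking all three classes $a$, $b$, $c = a + b$ makes the final variable free in every class of $\mathbb{F}_4$ and hence liftable the extra level needed to apply Hensel's Lemma.
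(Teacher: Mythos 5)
Your argument is correct and is essentially the paper's proof: distributing the three level-$0$ variables among the three nonzero classes and contracting them in two stages is exactly the triplet contraction (a point the paper itself remarks on), and your freeness-in-all-classes analysis is precisely the content of \Cref{triplet}, which the paper invokes twice --- once for the triplet itself and once after bridging through the level-$1$ variable. The only difference is that you unpack that lemma by hand rather than citing it.
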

\begin{proof}
By a change of variables, the three variables in level 0 can be distributed among the three different classes, forming a triplet.  By \Cref{triplet}, assume the triplet contracts to level 1.  The resulting variable can be contracted one level higher, and zero follows from \Cref{triplet}.
\end{proof}

\begin{lemma}
\label{threecases}
If (\ref{form}) is of type
\begin{enumerate}
    \item $\left(5\right)$,
    \item $\left(4,0,1\right)$, or
    \item $\left(2,3\right)$,
\end{enumerate}
and $3 \nmid d$, then it has a nontrivial zero.
\end{lemma}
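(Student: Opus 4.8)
The plan is to reduce each of the three types to one already settled --- type $\left(2,1,1\right)$ (\Cref{211}) or type $\left(3,1\right)$ (\Cref{31}) --- by a single well-chosen contraction. The crucial tool is that, since $3 \nmid d$, every nonzero residue is a $d$th power (as noted at the start of \Cref{notdivisible}), so any variable may be re-classed to any nonzero class of its level; hence whenever three variables occupy a common level we may assume they all lie in one class and apply \Cref{amongthree} to extract a pair that contracts \emph{exactly} one level up. Having the contraction climb exactly one level (rather than more) is what guarantees we land precisely in one of the solved types rather than in something only formally stronger.

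Concretely, for type $\left(5\right)$ I would re-class three of the five level-$0$ variables into a common class, use \Cref{amongthree} to contract a pair among them up to level $1$, and observe that the three remaining level-$0$ variables together with the new level-$1$ variable constitute type $\left(3,1\right)$; \Cref{31} then produces a nontrivial zero. For type $\left(4,0,1\right)$ the identical move on three of the four level-$0$ variables leaves two variables in level $0$, one in level $1$, and the prescribed variable in level $2$ --- that is, type $\left(2,1,1\right)$ --- and \Cref{211} applies. For type $\left(2,3\right)$ I would instead carry out the move on the three level-$1$ variables: a pair among them contracts up to level $2$, leaving two variables in level $0$, one in level $1$, and one in level $2$, again type $\left(2,1,1\right)$.

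I do not expect a genuine obstacle here; the lemma is essentially a matter of invoking the earlier results in the right order. The only points needing care are bookkeeping: that re-classing really does collect the chosen three variables into a single class (it does, since for $3 \nmid d$ the $d$th powers in $\mathcal{O}^\times$ surject onto $\mathbb{F}_4^\times$), that \Cref{amongthree} indeed supplies a pair contracting exactly one level up (this is the first assertion of that lemma), and that a contraction which happened to over-climb would still force a zero --- but that is immediate from Hensel's Lemma applied to a level-$0$ variable. If anything is delicate it is only verifying that in each case the resulting configuration literally matches the type hypothesis of \Cref{211} or \Cref{31}, which the counts above make routine.
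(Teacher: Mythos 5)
Your proof is correct and follows essentially the same route as the paper: each type is reduced by a single contraction to type $\left(3,1\right)$ or $\left(2,1,1\right)$ and the earlier lemmas are invoked. The only cosmetic difference is that for types $\left(5\right)$ and $\left(4,0,1\right)$ you obtain the ``pair contracting exactly one level up'' by re-classing three variables and applying \Cref{amongthree}, whereas the paper uses \Cref{contractions}~(3) for the first and a short case split (a pair reaching level 2 meets the prescribed level-2 variable) for the second; both mechanisms are valid.
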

\begin{proof}
\begin{enumerate}
    \item By \Cref{contractions} (3), assume a pair contracts from level 0 to level 1.  A zero follows from \Cref{31}.
    \item If a pair from level 0 contracts to level 2, then it can be contracted with the preexisting variable to at least level 3, and a zero follows.  Thus assume a pair contracts from level 0 to level 1.  A zero follows from \Cref{211}.
    \item By a change of variables, we may assume that three variables in level 1 are in the same class.  By \Cref{amongthree}, assume that a pair contracts from level 1 to level 2.  A zero follows from \Cref{211}.
\end{enumerate}
\end{proof}

\begin{theorem}
\label{ndivupper}
$\Gamma^*(2m, \mathbb{Q}_2(\sqrt{5})) \le \frac{3}{2}d + 1$.
\end{theorem}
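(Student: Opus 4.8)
The plan is to reduce, by normalization, to the two- and three-level configurations already disposed of in Lemmas \ref{211}, \ref{31}, and \ref{threecases}.

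Concretely, suppose (\ref{form}) has $s \ge \frac{3}{2}d + 1 = 3m+1$ variables. First I would invoke the normalization lemma to assume $s_0 \ge \frac{s}{d}$, $s_0 + s_1 \ge \frac{2s}{d}$, and $s_0 + s_1 + s_2 \ge \frac{3s}{d}$, where $s_i$ is the number of variables in level $i \pmod d$. Since $m \ge 3$ and each $s_i$ is an integer, these inequalities force $s_0 \ge 2$, $s_0 + s_1 \ge 4$, and $s_0 + s_1 + s_2 \ge 5$; this small arithmetic check is the only place the lower bound on $s$ is used.

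Next I would split on the value of $s_0$. If $s_0 \ge 5$, the form is of type $(5)$ and Lemma \ref{threecases}(1) gives a zero. If $s_0 = 4$, then $s_1 + s_2 \ge 1$: when $s_1 \ge 1$ the form contains type $(3,1)$ and Lemma \ref{31} applies, while when $s_1 = 0$ we have $s_2 \ge 1$, hence type $(4,0,1)$, handled by Lemma \ref{threecases}(2). If $s_0 = 3$, then $s_1 \ge 1$, the form contains type $(3,1)$, and Lemma \ref{31} applies again. Finally, if $s_0 = 2$, then $s_1 \ge 2$: when $s_1 \ge 3$ the form contains type $(2,3)$ and Lemma \ref{threecases}(3) applies; when $s_1 = 2$ the third normalization inequality forces $s_2 \ge 1$, so the form contains type $(2,1,1)$ and Lemma \ref{211} applies. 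These cases are exhaustive.

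There is no real obstacle here beyond organizing the case analysis so that every admissible triple $(s_0, s_1, s_2)$ compatible with the normalization inequalities is matched with one of the known types; the one point to watch is the $s_0 = s_1 = 2$ branch, where one genuinely needs the inequality $s_0 + s_1 + s_2 \ge 5$ to guarantee a variable in level $2$. All of the substantive work—in particular the use of $3 \nmid d$ to contract arbitrary pairs of variables in the same level—has already been absorbed into Lemmas \ref{211}, \ref{31}, and \ref{threecases}.
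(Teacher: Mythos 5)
Your proof is correct and follows exactly the paper's argument: normalize to get $s_0 \ge 2$, $s_0 + s_1 \ge 4$, $s_0 + s_1 + s_2 \ge 5$, then check that every admissible distribution is covered by Lemmas \ref{211}, \ref{31}, and \ref{threecases}. The only difference is that you write out the case analysis explicitly where the paper leaves it to the reader.
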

\begin{proof}
By normalization, level 0 contains at least two variables, levels 0 and 1 together contain at least four, and levels 0 through 2 contain at least five.  All possible distributions of these five variables among the three levels is covered by one of Lemmas \ref{211} through \ref{threecases}.
\end{proof}

\section{Lower Bounds}
\begin{theorem}
\label{ndivlower}
Let $d = 2m$, $m$ odd, $m \ge 3$. Then, $\Gamma^*(d, \mathbb{Q}_2(\sqrt{5})) \ge \frac{3}{2}d + 1$.  \end{theorem}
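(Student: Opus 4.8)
The plan is to produce an explicit additive form of degree $d=2m$ in exactly $\tfrac32 d=3m$ variables over $\mathcal O=\mathbb Z_2[\alpha]$ that has no nontrivial zero; this matches the upper bound of \Cref{ndivupper} and so proves optimality. Since multiplying a form by a power of $2$ permutes the levels cyclically without affecting solubility, I would search among forms supported on levels $0,1,\dots,2m-1$ carrying the ``$2,1,2,1,\dots$'' distribution that the normalization lemma (see \cite{davenport1963homogeneous}) identifies as the extremal one for $s=3m$. The natural candidate is
\[
F \;=\; \sum_{k=0}^{m-1} 4^{k}\bigl(a_k\,x_k^{d}+b_k\,y_k^{d}+2\,c_k\,z_k^{d}\bigr),
\]
with two variables at each even level $2k$, one at each odd level $2k+1$, and units $a_k,b_k,c_k\in\mathcal O^{\times}$ to be pinned down modulo $8$. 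Two facts from the Preliminaries carry the structure. First, $-1\notin(K^{\times})^{d}$ (since $i\notin\mathbb Q_2(\sqrt5)$; equivalently $-1$ is not $\equiv$ mod $8$ to an element of the $d$-th power coset $\mu_3\cdot\{1,1+4\}$), so each block with $a_k=b_k$ is anisotropic, and for units $\tilde x,\tilde y$ the value $a_k(\tilde x^{d}+\tilde y^{d})$ has $2$-adic valuation $0$ or exactly $1$ — never $\ge 2$, because $\tilde x^{d}\equiv\tilde y^{d}\pmod 2$ forces $\tilde x^{d}\equiv\tilde y^{d}\pmod 4$ and then $\tilde x^{d}+\tilde y^{d}\equiv 2\tilde x^{d}\not\equiv 0\pmod 4$. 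Second, $(1+2t)^{d}\equiv 1+4\operatorname{Tr}_{\mathbb F_4/\mathbb F_2}(\bar t)\pmod 8$, which identifies $(\mathcal O^{\times})^{d}$ modulo $8$ with $\mu_3\cdot\{1,5\}$ and lets one compute everything below modulo $8$ (and, where $m\bmod 4$ enters via $4m\operatorname{Tr}(\bar t)$, modulo $16$).

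The anisotropy is proved by $2$-adic descent. Suppose $F(\mathbf x)=0$ with $\mathbf x$ primitive. The term from an $x_k$ or $y_k$ has valuation $2(k+m\,v(\cdot))$, always even; the term from $z_k$ has valuation $2(k+m\,v(\cdot))+1$, always odd; and since $d=2m$ the ``bases'' $2(k+m\,v)$ are pairwise distinct over $(k,v)\in\{0,\dots,m-1\}\times\mathbb Z_{\ge0}$. Hence at each valuation at most one block and at most one singleton are active, and a singleton at level $2k+1$ can meet only the block of the same index $k$ (and only in the valuation-$1$ case of that block). Now read $F=0$ modulo successive powers of $2$. At the minimal valuation $M$: if $M$ is even it comes from a single block in its valuation-$0$ case, giving $F\equiv 2^{M}\cdot(\text{unit})\pmod{2^{M+1}}$, a contradiction; if $M$ is odd it comes from a block $a_k(x_k^{d}+y_k^{d})=2u_1$ with $u_1$ a unit, and if the index-$k$ singleton is absent we again get $F\equiv 2^{M}\cdot(\text{unit})$, so the singleton is present and $u_1+c_kz_k^{d}\equiv 0\pmod 2$. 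One then computes $u_1+c_kz_k^{d}$ modulo $8$ from the two facts above; the choice of $c_k$ will make this quantity nonzero and, more importantly, will force the residue it leaves to propagate to the index-$(k+1)$ block in a shape the same argument forbids. Running this around the cycle of $2m$ levels makes the forced congruences inconsistent, so no primitive zero exists.

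The heart of the matter — and the step I expect to be hardest — is choosing the $a_k,b_k,c_k$ so that the descent genuinely terminates rather than perpetuating consistently; if it could perpetuate, the form would actually be isotropic, which is exactly what happens for the naive choice $a_k=b_k=1$, $c_k=-1$, since then each block-plus-singleton $x_k^{d}+y_k^{d}-2z_k^{d}$ is separately soluble. Concretely, the possible leading values $u_1=\tfrac12(x_k^{d}+y_k^{d})$ (with $x_k^{d}\equiv y_k^{d}\bmod 2$) occupy only four of the eight classes of $\mathcal O^{\times}/(\mathcal O^{\times})^{d}$ — those lying over the $\mu_3$-cosets of $1$ and $3$ in $(\mathcal O/4)^{\times}$ — so one must pick each $c_k$ (equivalently $-1/c_k$) outside the corresponding four forbidden classes, and simultaneously arrange the $a_k$ so that the cyclic chain of equations relating $\tfrac12(x_k^{d}+y_k^{d})+c_kz_k^{d}$ to $a_{k+1}(x_{k+1}^{d}+y_{k+1}^{d})+\cdots$ cannot close up periodically. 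I would isolate these requirements as a short lemma on admissible residues modulo $8$, check that one simple explicit choice of coefficients meets them, verify that for $d=6$ the resulting $9$-variable form agrees with the example credited to Knapp in \cite{miranda_dissertation}, and conclude that the descent terminates for every $d=2m$ with $m$ odd and $m\ge 3$.
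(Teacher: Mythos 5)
There is a structural problem with your candidate form that no choice of the units $a_k,b_k,c_k$ can repair. Your $F$ places two variables at level $0$, one at level $1$, and two at level $2$, so it is of type $\left(2,1,1\right)$ in the paper's notation. When $3\nmid d$ the map $u\mapsto u^{d}$ is surjective onto $\mathbb{F}_4^{\times}$, so any two variables in a common level can be contracted up at least one level; contracting the pair at level $0$ and then absorbing an occupant of whichever level the running contraction lands in produces a variable at level at least $3$, and Lemma \ref{quartic_hensels_lemma} gives a nontrivial zero. This is exactly Lemma \ref{211}, and it shows that the $(2,1,2,1,\ldots)$ shape is \emph{always} isotropic in the case $3\nmid d$ --- which is precisely the case in which the bound $\tfrac{3}{2}d+1$ is sharp and the lower bound is needed. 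The paper's construction avoids this by putting three variables at each even level and none at odd levels, namely $\mathcal H=\sum_{i=0}^{d/2-1}4^{i}\bigl(x_{3i}^{d}+x_{3i+1}^{d}+\alpha x_{3i+2}^{d}\bigr)$: the $d$-th powers modulo $4$ lie in $\{0,1,\alpha+1,3\alpha+2\}$, a direct check shows $x^{d}+y^{d}+\alpha z^{d}$ has no primitive zero modulo $4$, and a descent modulo increasing powers of $2$ then forces every variable to be divisible by $2$, contradicting primitivity.

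Even setting that aside, the argument as written is a plan rather than a proof: the coefficients $a_k,b_k,c_k$ are never exhibited, and the ``short lemma on admissible residues modulo $8$'' that is supposed to guarantee the cyclic chain of congruences cannot close up is neither stated nor proved --- you explicitly defer it as the hardest step, and you note that the naive choice fails. Since the existence of \emph{some} valid choice is the entire content of the theorem, this cannot be left open. To repair the approach you would need to abandon the two-plus-one distribution in favor of three variables per block within a single level (as in the paper), and then verify the mod $4$ non-solubility of that block directly.
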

\begin{proof}
Let $\mathcal{G}(x, y, z) = x^d + y^d + \alpha z^d$.
We will show that 
$$\mathcal{H} = \sum_{i=0}^{\frac{d}{2}-1} 2^{2i}\mathcal{G}(x_{3i}, x_{3i+1}, x_{3i+2})$$
has no nontrivial zero in $\mathbb{Q}_2(\sqrt{5})$. Suppose that it does.  First, we note that the d\textsuperscript{th} powers modulo 4 are contained in 0, 1, $\alpha + 1$, and $\alpha + 2(\alpha + 1)$.  By direct calculation, one may see that $\mathcal{G}$ has no primitive zero modulo 4.  Therefore, each of $x_0$, $x_1$, and $x_2$ are divisible by 2.  After dividing through by 4, we again see that $x_3$, $x_4$, and $x_5$ are all divisible by 2.  Repeating this process, we conclude that all of the variables are divisible by 2, and so $\mathcal{H}$ has no primitive zero modulo $2^d$, and therefore no nontrivial zero in $\mathbb{Q}_2(\sqrt{5})$. \end{proof}

\begin{theorem}
\label{divlower}
Let $d = 2m$, $m$ odd, $m \ge 3$, $3 \mid m$. Then, $\Gamma^*(d, \mathbb{Q}_2(\sqrt{5})) \ge 3d + 1$.
\end{theorem}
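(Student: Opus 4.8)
The plan is to exhibit an explicit additive form in $3d$ variables over $\mathbb{Q}_2(\sqrt5)$ with no nontrivial zero, following the template of the proof of Theorem~\ref{ndivlower} but with larger blocks. Since $3\mid m$, we have $3\mid d$, and the arithmetic input I would establish first is that every unit $d$th power in $\mathcal O$ is $\equiv 1\pmod 4$: if $u\equiv u_0\pmod 2$ with $u_0\in\{1,\alpha,1+\alpha\}$ then $u^2\equiv u_0^2\pmod 4$, a direct check gives $u_0^6\equiv 1\pmod 4$ for each of the three values of $u_0$, and hence $u^d=(u^6)^{m/3}\equiv 1\pmod 4$. This is the sharpening, available when $3\mid m$, of the remark on $d$th powers modulo $4$ used in Theorem~\ref{ndivlower}.

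Now set $\Theta(y_1,\dots,y_6)=y_1^d+y_2^d+y_3^d+\alpha\,(y_4^d+y_5^d+y_6^d)$ and let
$$\mathcal F=\sum_{j=0}^{m-1}2^{2j}\,\Theta\bigl(x_{6j+1},\dots,x_{6j+6}\bigr),$$
a form in $6m=3d$ variables. The key lemma to prove is that $\Theta$ has \emph{no primitive zero modulo $4$}. Indeed, if $\Theta(\mathbf y)\equiv 0\pmod 4$ with not every $y_i$ divisible by $2$, put $S=\{i:2\nmid y_i\}\neq\varnothing$; for $i\in S$ we have $y_i^d\equiv 1\pmod 4$ by the previous paragraph, and for $i\notin S$ we have $y_i^d\equiv 0\pmod{2^d}$ with $d\ge 6$, so $\Theta(\mathbf y)\equiv\sum_{i\in S}c_i\pmod 4$, where $c_i\in\{1,1,1,\alpha,\alpha,\alpha\}$ is the coefficient of $y_i$. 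Writing this sum as $a\cdot 1+b\cdot\alpha$ with $0\le a,b\le 3$ not both zero, and using that $\{1,\alpha\}$ is a $\mathbb Z/4$-basis of $\mathcal O/4\mathcal O$, we get $a\cdot 1+b\cdot\alpha\not\equiv 0\pmod 4$, a contradiction. (Equivalently, $\{1,1,1,\alpha,\alpha,\alpha\}$ is a zero-sum-free sequence in $\mathcal O/4\mathcal O\cong(\mathbb Z/4)^2$; since the Davenport constant of $(\mathbb Z/4)^2$ is $7$, no sequence of $7$ units would work, which is the structural reason the construction gives only $3d$.)

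The second step is the descent. Suppose $\mathcal F(\mathbf x)=0$ with $\mathbf x\neq\mathbf 0$; after scaling, assume some $x_i$ is not divisible by $2$. I would show by induction on $j=0,1,\dots,m-1$ that every variable of the $j$th block is divisible by $2$: granting this for all blocks $i<j$ and reducing $\mathcal F(\mathbf x)=0$ modulo $2^{2j+2}$, each block $i<j$ contributes a multiple of $2^{2i+d}$ with $2i+d\ge d=2m\ge 2j+2$, each block $i>j$ contributes a multiple of $2^{2i}$ with $2i\ge 2j+2$, so $2^{2j}\Theta(x_{6j+1},\dots,x_{6j+6})\equiv 0\pmod{2^{2j+2}}$, i.e. $\Theta(x_{6j+1},\dots,x_{6j+6})\equiv 0\pmod 4$, and the key lemma forces the six variables of block $j$ to be divisible by $2$. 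Hence every $x_i$ is divisible by $2$, contradicting the choice of $\mathbf x$ (iterating, every $x_i$ is divisible by every power of $2$, so $\mathbf x=\mathbf 0$). Therefore $\mathcal F$ is anisotropic and $\Gamma^*(d,\mathbb{Q}_2(\sqrt5))\ge 3d+1$.

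The two computations — the value of unit $d$th powers modulo $4$, and the subset-sum check — are routine and are not the obstacle. The only points requiring care are the bookkeeping in the descent, in particular the boundary inequality $d\ge 2j+2$ which is tight at $j=m-1$, and the recognition that the block spacing must be $2$ rather than $1$: $\Theta$ does have primitive zeros modulo $2$ (e.g. $y_1=y_2=1$, the rest $0$), so a spacing-$1$ arrangement in $3d$ variables would fail to descend.
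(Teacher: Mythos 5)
Your construction is correct, and it proves the theorem by the same overall strategy as the paper (an explicit anisotropic form in $3d$ variables, killed by a mod-$4$ obstruction plus descent), but with a genuinely different form. The paper places three variables at every level $0,1,\dots,d-1$ with coefficients cycling through $1,\alpha,1+\alpha$, and runs the descent one level at a time, reading off the obstruction from the pair of consecutive levels $\mathcal F(x_0,x_1,x_2)+2\alpha\mathcal F(x_3,x_4,x_5)\pmod 4$. You instead place six variables at each even level with coefficients $1,1,1,\alpha,\alpha,\alpha$, so that the entire obstruction is packaged into a single clean lemma ($\Theta$ has no primitive zero modulo $4$, because a nonempty sub-multiset of $\{1,1,1,\alpha,\alpha,\alpha\}$ sums to $a+b\alpha$ with $0\le a,b\le 3$ not both zero, hence is nonzero in $\mathcal O/4\mathcal O\cong(\mathbb Z/4)^2$), and the descent becomes a uniform block-by-block induction with the tight boundary case $j=m-1$ correctly accounted for. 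Both arguments rest on the same arithmetic input, namely that $(\mathcal O/4\mathcal O)^*$ has exponent $6$ so unit $d$th powers are $\equiv 1\pmod 4$ when $6\mid d$; your verification of this is correct. What your version buys is conceptual transparency: the remark that $\{1,1,1,\alpha,\alpha,\alpha\}$ is a maximal zero-sum-free sequence in $(\mathbb Z/4)^2$ (whose Davenport constant is $7$) explains structurally why this family of constructions cannot be pushed past $3d$ variables, which connects nicely to the paper's conjecture that $\Gamma^*(d,\mathbb Q_2(\sqrt5))=3d+1$ when $3\mid d$. Two cosmetic points only: your form is named $\mathcal F$, colliding with the paper's three-variable $\mathcal F$, and the final parenthetical iteration is redundant once you have normalized a putative zero to be primitive.
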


\begin{proof}
Let $\mathcal{F}(x, y, z) = x^d + y^d + z^d$.
We will show that 
\begin{align*}
\mathcal{I} = \sum_{i=0}^{\frac{d}{3}-1}
\Big\lparen
2^{3i}\mathcal{F}(x_{9i}, x_{9i+1}, x_{9i+2}) + &
2^{3i+1}\alpha\mathcal{F}(x_{9i+3}, x_{9i+4}, x_{9i+5}) \\ + 
& 2^{3i+2}(1 + \alpha)\mathcal{F}(x_{9i+6}, x_{9i+7}, x_{9i+8})
\Big\rparen
\end{align*}
has no nontrivial zero in $\mathbb{Q}_2(\sqrt{5})$.  Suppose that it does.  First, we note that the d\textsuperscript{th} powers modulo 4 are 0 and 1.  Therefore, the values represented by $\mathcal{F}(x_0, x_1, x_2) + 2\alpha\mathcal{F}(x_3, x_4, x_5)$ modulo 4, are $\{0,1,2,3\} + 2\alpha\{0,1\}$.  In order to make this congruent to 0 modulo 4, we conclude that all of $x_0$, $x_1$, and $x_2$ are divisible by 2.  After dividing through by $2\alpha$, we again see that $x_3$, $x_4$, and $x_5$ are all divisible by 2.  Repeating this process, we conclude that all of the variables are divisible by 2, and so $\mathcal{I}$ has no primitive zero modulo $2^d$, and therefore no nontrivial zero in $\mathbb{Q}_2(\sqrt{5})$.  
\end{proof}

\bibliographystyle{plain}
\bibliography{biblio}

\end{document}